\newtheorem{theorem}{Theorem}[section]
\newtheorem{lemma}{Lemma}[section]
\newtheorem{example}{Example}[section]
\numberwithin{equation}{section}
\numberwithin{figure}{section}
\numberwithin{table}{section}
\newcommand{\jp}{j+\frac{1}{2}}
\newcommand{\jm}{j-\frac{1}{2}}
\newcommand{\ljp}{|_{j+\frac{1}{2}}}
\newcommand{\ljm}{|_{j-\frac{1}{2}}}
\newcommand{\xjm}{x_{j-\frac{1}{2}}}
\newcommand{\xjp}{x_{j+\frac{1}{2}}}
\newcommand{\xjmm}{x^+_{j-\frac{1}{2}}}
\newcommand{\xjpp}{x^-_{j+\frac{1}{2}}}
\newcommand{\ff}{\frac{1}{2}}	
\newcommand{\bhj}{\bar{h}_j}
\newcommand{\thet}{{(\theta)}}
\newcommand{\tthet}{(\tilde{\theta})}
\newcommand{\tiltheta}{\tilde{\theta}}
\newcommand{\chtkl}{Ch^{2k+2\ell+2}} 
\newcommand{\chkl}{Ch^{k+\ell+1}}    
\newcommand{\tilth}{\tilde{\theta}}
\newcommand{\tilsig}{\tilde{\sigma}}
\newcommand{\la}{{(\lambda)}}
\newcommand{\tla}{(\tilde{\lambda})}
\newcommand{\tlam}{\tilde{\lambda}}
\newcommand{\sumjn}{\sum_{j =1}^N}
\newcommand{\z}[1]{\mathbb{Z}_{#1}}
\newcommand{\bb}[1]{\mathcal{#1}}
\newcommand{\hjop}[3]{\mathcal{H}_j^{ #1 }\left( #2,#3\right) }
\newcommand{\hop}[3]{\mathcal{H}^{ #1 }\left( #2,#3\right) }
\newcommand{\di}{D^{-1}_x}
\newcommand{\il}[1]{{#1}^{\ell}_I}
\newcommand{\wl}[1]{W_{#1}^{\ell}}
\newcommand{\wll}[1]{w_{#1}^{\ell}}
\newcommand{\wllz}[1]{w_{#1}^{\ell}(0)}
\newcommand{\wi}[1]{w_{#1}^i }
\newcommand{\wimo}[1]{w_{#1}^{i-1}}
\newcommand{\wipo}[1]{w_{#1}^{i+1}}
\newcommand{\wice}[1]{\left.w_{#1}^i\right|_{\ce}}
\newcommand{\wz}[1]{w_{#1}^{0}}
\newcommand{\wo}[1]{w_{#1}^{1}}
\newcommand{\ih}{\mathcal{I}_h}
\newcommand{\be}[1]{\bar{e}_{#1}}
\newcommand{\bez}[1]{\bar{e}_{#1}(0)}
\newcommand{\eut}{(e_{u})_t}                   
\newcommand{\epsilonut}{{(\epsilon_u)}_t}      
\newcommand{\betz}[1]{(\bar{e}_{#1})_t(0)}     
\newcommand{\bet}[1]{(\bar{e}_{#1})_t}         
\newcommand{\bexz}[1]{(\bar{e}_{#1})_x(0)}     
\newcommand{\wllt}[1]{\partial_t w_{#1}^{\ell}}
\newcommand{\pnt}{\partial_t^n}
\newcommand{\lenj}[1]{L_{j,#1}}
\newcommand{\intid}[1]{\int_{I} #1 \mathrm{d}x}
\newcommand{\intijd}[1]{\int_{\ce} #1 \mathrm{d}x}
\newcommand{\leri}[1]{\left( #1 \right)}
\newcommand{\inttdtau}[1]{\int_{0}^{t} #1 \mathrm{d}\tau}
\newcommand{\intcd}[1]{\int_{\ce} #1 \mathrm{d}x }
\newcommand{\fd}{\frac{\mathrm{d}}{\mathrm{d}t}}
\newcommand{\norm}[1]{\Vert #1 \Vert}
\newcommand{\norms}[1]{\Vert #1 \Vert^2}
\newcommand{\normice}[1]{\Vert #1 \Vert_{\infty,I_j}}
\newcommand{\normki}[1]{\Vert #1 \Vert_{k+1,\infty,I_j}}
\newcommand{\jump}[1]{\left[\kern-0.15em\left[ #1 \right]\kern-0.15em\right]}
\newcommand{\jumpbig}[1]{\big[\kern-0.3em\big[ #1 \big]\kern-0.3em\big]}
\newcommand{\jumpmo}[1]{\left|\kern-0.15em\left[ #1 \right]\kern-0.15em\right|}
\newcommand{\jumpmos}[1]{\left|\kern-0.15em\left[ #1 \right]\kern-0.15em\right|^2}
\newcommand{\jumpjp}[1]{{\left[\kern-0.15em\left[ #1 \right]\kern-0.15em\right]}_{\jm}}
\newcommand{\jumpsjp}[1]{{\left[\kern-0.15em\left[ #1 \right]\kern-0.15em\right]}^2_{\jm}}
\newcommand{\ave}[1]{\left\{\kern-0.3em\left\{ #1 \right\}\kern-0.3em\right\}}
\newcommand{\vertce}{\vert_{\ce}}
\newcommand{\ce}{I_j}
\newcommand{\mo}[1]{\left| #1 \right|}	
\newcommand{\mos}[1]{\left| #1 \right|^2}
\newcommand{\red}[1]{\textcolor{red}{#1}}
\newcommand{\pt}{P_{\theta}}
\newcommand{\tpt}{\tilde{P}_{\theta}}
\newcommand{\ptt}{P_{\tilde{\theta}}}
\newcommand{\tptt}{\tilde{P}_{\tilde{\theta}}}
\newcommand{\pl}{P_{\lambda}}
\newcommand{\tpl}{\tilde{P}_{\lambda}}
\newcommand{\ptl}{P_{\tilde{\lambda}}}
\newcommand{\tptl}{\tilde{P}_{\tilde{\lambda}}}
\newcommand{\psig}{P_{\sigma}}
\newcommand{\nnum}[1]{\mathrm{\uppercase\expandafter{\romannumeral#1}}}
\newcommand{\ind}{\quad~\!}
\newcommand{\rrom}[1]{\rm(\uppercase\expandafter{\romannumeral  #1}) }
\begin{document}

\title{Superconvergence of the local discontinuous Galerkin method with generalized numerical fluxes for one-dimensional linear time-dependent fourth-order equations}
\vspace{0.035in}
\date{}

\author{
Linhui Li\footnote{School of Mathematics, Harbin Institute of Technology, Harbin {\rm 150001}, Heilongjiang, China.},  Xiong Meng\footnote{Corresponding author. School of Mathematics, Harbin Institute of Technology, Harbin {\rm 150001}, Heilongjiang, China. The research of this author was supported by National Natural Science Foundation of China grants 12371365, 11971132, Natural Science Foundation of Heilongjiang Province grant  YQ2021A002, and the Fundamental Research Funds for the Central Universities grant HIT.OCEF.2022031.},
and Boying Wu\footnote{School of Mathematics, Harbin Institute of Technology, Harbin {\rm 150001}, Heilongjiang, China. The research of this author was supported by National Natural Science Foundation of China grant 11971131.}
}

 \maketitle

\begin{abstract}
In this paper, we concentrate on the superconvergence of the local discontinuous Galerkin method with generalized numerical fluxes for one-dimensional linear time-dependent fourth-order equations. The adjustable numerical viscosity of the generalized numerical fluxes is beneficial for long time simulations with a slower error growth. By using generalized Gauss--Radau projections and correction functions together with a suitable numerical initial condition, we derive, for polynomials of degree $k$, $(2k+1)$th order superconvergence for the numerical flux and cell averages, $(k+2)$th order superconvergence at generalized Radau points, and $(k+1)$th order for error derivative at generalized Radau points. Moreover, a supercloseness result of order $(k+2)$ is established between the generalized Gauss--Radau projection and the numerical solution.
Superconvergence analysis of mixed boundary conditions is also given. Equations with discontinuous initial condition and nonlinear convection term are numerically investigated, illustrating that the conclusions are valid for more general cases.
\end{abstract}

\paragraph{Keywords} Local discontinuous Galerkin method, Linear fourth-order equation, Superconvergence, Correction function, generalized Gauss--Radau projection.

\paragraph{\bf AMS subject classifications}  65M12, 65M15, 65M60

\section{Introduction}
In this paper, we investigate superconvergence of local discontinuous Galerkin (LDG) methods with generalized numerical fluxes for one-dimensional linear fourth-order problem
\begin{subequations}\label{eq1}
\begin{align}
u_t +  \alpha u_x + \beta u_{xx} + u_{xxxx} &= 0,~& &(x,t)\in I\times (0,T],\label{eq1-a}\\
u(x,0) &= u_0(x),~& &x\in I, \label{eq1-b}
\end{align}
\end{subequations}
where $\alpha$ and $\beta$ are constant, and $I = (0, 2 \pi)$. Periodic and mixed boundary conditions are considered. Note that, in  \eqref{eq1-a}, $u_{xxxx}$ dominates in spite of the anti-diffusion term $\beta u_{xx}$ with $\beta>0$. The generalized numerical fluxes with flexible numerical viscosities allow us to obtain a slower error growth for long time simulations, when compared with the LDG scheme using the upwind and alternating fluxes; see Figure \ref{fig:fig1label} below. With the help of correction functions and an elaborate numerical initial condition, by establishing a superconvergent bound for interpolation errors, supercongence for the numerical flux, cell averages, Radau points as well as supercloseness are derived.

The discontinuous Galerkin (DG) method, allowing discontinuities across cell boundaries in the finite element space, was proposed mainly for solving hyperbolic conservation laws and systems \cite{cockburn1989TVB2,cockburn1990RKDG4,cockburn1998RKDG5}.
In \cite{cockburn1998LDG}, the LDG method was developed for solving  convection-diffusion equations, which was achieved by introducing an auxiliary variable and rewriting the original problem into a first-order system to which the DG method can be applied.
Later, the LDG methods have been widely adopted to solve high-order partial differential equations (PDEs), such as Korteweg--de Vries (KdV) type equations \cite{yan2002local},  Schr\"{o}dinger equations \cite{xu2005local2}, the Zakharov--Kuznetsov equation \cite{xu2005local}, and viscous Burgers equations \cite{hutridurga2023vvb}. For more details of DG and LDG methods, we refer to the review paper \cite{shu2016discontinuous}.

The fourth-order PDEs have numerous physical and engineering applications. For example, fourth-order boundary value problems can describe the bending of an elastic beam, and the Cahn--Hilliard equation reflects the process of phase separation, in which the properties of fluid thermodynamics transfer smoothly  from one phase to another \cite{cahn1958hilliard}. DG and LDG methods have been studied for solving fourth-order PDEs. In \cite{dong2009analysis}, Dong and Shu used the LDG method for fourth-order time-dependent problems to obtain optimal error estimates in one- and multi-dimensional spaces.
A free-energy stable DG method for the Cahn--Hilliard equation with non-conforming elements was developed in \cite{ntoukas2021ch}.

Superconvergence of DG and LDG methods has gained more attention in recent years. Based on the correction function technique, in \cite{cao2014SCDG}, Cao et al. proved superconvergence of numerical fluxes, cell averages and Radau points of DG methods for linear hyperbolic equations. Superconvergence of LDG methods for high-order linear problems was given in \cite{cao2017highorder}. Superconvergence of ultraweak-LDG method for linear fourth-order equations can be found in \cite{liuyong2020SCofUWLDG4}. We would like to emphasize that superconvergence property is probably sensitive to the numerical initial condition and a special initial discretization should be chosen.
In addition, based on Fourier analysis, \cite{zhong2022sc} studied the superconvergence properties of various direct DG methods for diffusion equations, and presented quantitative errors at Lobatto points.

In the design of DG and LDG schemes, the choice of numerical fluxes plays an important role to guarantee stability and optimal order of accuracy. For linear hyperbolic equations, instead of using classical monotone or upwind fluxes, \cite{meng2016optimal} proposed a nonmonotone upwind-biased flux and showed $L^2$-stability as well as optimal error estimates, in which a global generalized Gauss--Radau (GGR) projection is constructed. Almost at the same time, the GGR projection is considered in LDG methods for solving the Burgers--Poisson equation in \cite{liu2015local}. The generalized alternating fluxes were then developed for LDG methods solving linear convection-diffusion equations \cite{cheng2017application}, in which a modified GGR projection is designed to deal with different weighs of generalized numerical fluxes. For the Vlasov-viscous Burgers system, a coupling DG and LDG method with generalized numerical fluxes was introduced in \cite{hutridurga2023vvb}, which is mass and momentum conservative. Moreover, superconvergence of DG methods with upwind-biased fluxes for linear hyperbolic equations and LDG methods with generalized alternating fluxes for linear convection-diffusion equations were given in \cite{cao2017upwind-biased} and \cite{liu2021sc}, respectively.

Generalized fluxes with flexible numerical viscosities may be useful for long time simulations, as shown in Figure \ref{fig:fig1label}. It would be interesting to investigate superconvergence of LDG methods for linear fourth-order problems, especially when generalized fluxes with different weights are concerned. The superconvergence analysis mainly involves two difficulties. One is that the correction function is globally coupled, which can be solved by the property of circulant matrices. The other difficulty is a proper choice of numerical initial condition, which is achieved by the exact collocation for the third order derivative. Superconvergent initial error estimates can be obtained by using the discrete Poincar\'{e} inequality and the relationship between the derivative a numerical solution and another auxiliary variable.

The paper is organized as follows.
In Section \ref{scheme}, we present the LDG scheme using generalized numerical fluxes for one-dimensional fourth-order problem with periodic boundary conditions and introduce some preliminaries regarding GGR projections as well as properties of DG operators. In Section \ref{main body}, we construct correction functions with superconvergence property and derive a superconvergent bound for interpolation errors. Section \ref{sc analysis} is the main body of the paper, in which a suitable numerical initial condition is chosen, and a supercloseness result between the interpolation function and the LDG solution is established, followed by  superconvergence of numerical fluxes, cell averages and Radau points. Extension to mixed boundary conditions is provided in Section \ref{extensions}. In
Section \ref{experiments}, numerical experiments with linear equations as well as nonperiodic boundary conditions are given to validate theoretical results, and problems with discontinuous initial condition and nonlinear convection term are presented to illustrate that the superconvergent results hold for more general cases. We end in Section \ref{conclusion} with conclusions and perspectives for future work.

\section{The LDG scheme and preliminaries}\label{scheme}
Without loss of generality, consider $ \alpha = \beta = 1 $ in \eqref{eq1}, i.e.,
\begin{subequations}\label{eq_lin_4_}
\begin{align}
u_t +  u_x + u_{xx} + u_{xxxx} &= 0,~& &(x,t)\in I\times (0,T], \label{eq_lin_4_-a}\\
u(x,0) &= u_0(x),~& &x\in I, \label{eq_lin_4_-b}
\end{align}
\end{subequations}
where $I=(0, 2\pi)$. To clearly display superconvergence analysis of LDG methods with generalized numerical fluxes, we mainly consider periodic boundary conditions, and the case with mixed boundary conditions is discussed in Section \ref{extensions}.

\subsection{The LDG scheme}
We adopt the following standard notation.
Consider a partition $\mathcal{I}_h = \big\{\ce=(\xjm ,\xjp)\big\}^N_{j=1},~ j\in\z{N}$,
where, for any positive integer $\ell$, $\z{\ell} = \{1,\ldots,\ell\}$.
The cell center and cell length are denoted by
$x_j = \ff(\xjm +\xjp)$  and $h_j = \xjp-\xjm$, respectively.
Denote $h=\max_{j}h_j$, $\bhj =\frac{h_j}{2}$, and use $\Gamma_{\!h}$ to represent the set of cell boundary points. Assume that the mesh is quasi uniform, i.e., there is a positive constant $\gamma$ such that $h_j\ge \gamma h$, $\forall j\in \z{N}$.
The DG finite element space is
$$
V_{h}=\left\{v \in L^{2}(I):\left.v\right|_{I_{j}} \in P^k\left(I_{j}\right), \forall j \in \z{N}\right\} ,
$$
where $P^k(I_j)$ is the space of polynomials of degree at most $k$ in $I_j$.

For any integer $\ell \ge 0$, the Sobolev space in $D$ is denoted as $W^{\ell ,p}(D)$ equipped with the norm $\norm{\cdot}_{\ell ,p,D}=\norm{\cdot}_{W^{\ell ,p}(D)} $, and, when $p=2$,  $H^\ell(D) = W^{\ell ,2}(D)$ and $\norm{\cdot}_{\ell ,D} = \norm{\cdot}_{H^\ell(D)} $.
Here and below, the index $D$ or $\ell$ will be omitted when $D = I$ or $\ell=0$, and an unmarked norm is the standard $L^2$ norm in $I$.
For $p=2,\infty$, we denote the broken Sobolev spaces as
\begin{align*}
	W^{\ell,p}(\mathcal{I}_h) = \left\{v\in L^{2}(I) :\left.v\right|_{I_{j}} \in W^{\ell,p}(I_{j}), \forall j \in \z{N}\right\},
\end{align*}
with the norm
$$\norm{u}_{\ell} \triangleq \norm{u}_{H^\ell(\ih)} = \Big(\sum_{j=1}^N \norm{u}^2_{\ell,\ce}\Big)^{\frac{1}{2}},\quad \norm{u}_{\ell,\infty} \triangleq \norm{u}_{W^{\ell,\infty}(\ih)} = \max_{j\in\z{N}} \norm{u}_{\ell,\infty, \ce}.$$
Moreover, the $ L^2 $ norm for boundaries is
$
	\norm{u}_{\Gamma_{\!h}} = \leri{\sumjn\left[|u^+_{\jm}|^2 + |u
		^-_{\jp}|^2\right]}^{\ff},
$
and the seminorm
\begin{equation}\label{semi}
	\jumpmo{u} = \Big(\sumjn\jumpsjp{u}\Big)^{\ff}
\end{equation}
with $ \jump{u} = u^{+} - u^{-}$, where $u^{\pm}_{\jp}$ are limits from the right and left cells. $A\lesssim B$ means that $A$ can be bounded by $B$ multiplied by a positive constant independent of $h$.

To define the LDG scheme, we rewrite \eqref{eq_lin_4_-a} into a first-order system
\begin{align*}
u_t + ( u + p + r)_x = 0,\quad r - q_x = 0,\quad q - p_x = 0, \quad p - u_x = 0.
\end{align*}
Then, the semi-discrete LDG scheme for solving \eqref{eq_lin_4_} is to find  $u_h$, $p_h$, $q_h$, and $r_h\in V_h$ such that
\begin{subequations}\label{num_lin_4_LDG}
\begin{align}\label{LDG1}
		((u_h)_t,v)_j - \hjop{\theta}{u_h}{v} - \hjop{\tlam}{p_h}{v} - \hjop{\tiltheta}{r_h}{v} &= 0, \\ \label{LDG2}
		(r_h,\phi)_j + \hjop{\lambda}{q_h}{\phi} &= 0, \\ \label{LDG3}
		(q_h,\psi)_j + \hjop{\tlam}{p_h}{\psi} &= 0, \\
		(p_h,\zeta)_j + \hjop{\theta}{u_h}{\zeta} &= 0\label{LDG4}
\end{align}
\end{subequations}
hold for all $v$, $\phi$, $\psi$ and $\zeta\in V_h$ and $j\in \z{N}$. Here and below, $(\cdot, \cdot )_j$ denotes the $L^2 $ inner product in $\ce$ with $(\cdot, \cdot ) = \sumjn(\cdot, \cdot )_j$, and
\begin{equation}\label{dgo}
	\hjop{\alpha}{w}{v} = (w,v_x)_j - w^{(\alpha)}v^-{\ljp} + w^{(\alpha)}v^+{\ljm},\quad \hop{\alpha}{w}{v} = \sum_{j=1}^{N}\hjop{\alpha}{w}{v},\quad j\in \z{N}.
\end{equation}
The following generalized numerical fluxes with two weights $\theta, \lambda \neq \ff$ are chosen, namely
\begin{equation} \label{gflux_lin_4_LDG}
	 \hat{r}_h = r^{\tthet}_h, \quad  \hat{q}_h = q^{\la}_h,\quad \hat{p}_h = p^{\tla}_h,\quad \hat{u}_h = u^{\thet}_h,
\end{equation}
where, for $\sigma = \theta, \lambda$,
$u^{(\sigma)}_{\jp} = \sigma u^-_{\jp} + \tilde \sigma u^+_{\jp}$ and  $\tilde{\sigma} =  1-\sigma$.

\subsection{Preliminaries}
For $ u\in H^1(\ih)$, the GGR projection $\pt u$ \cite{cheng2017application} is defined as the unique function in $ V_h $ satisfying
\begin{subequations}\label{ggr_pro}
\begin{align}
		&~& &~& &~& &~&\intcd{\left(u-\pt u\right) v_h} &= 0, & &\forall v_h \in P^{k-1}(\ce),		&~& &~& &~& &~& \label{ggr_pro_1}\\
		&~& &~& &~& &~&\left(u-\pt u \right)^{(\theta)}_{\jp} &= 0,& &j\in \z{N},		&~& &~& &~& &~& \label{ggr_pro_2}
\end{align}
\end{subequations}
which has the following optimal approximation property
\begin{align}\label{pt error}
	\norm{u-\pt u}_{\ce} + h^{\ff}\norm{u-\pt u}_{\infty,\ce} \le Ch^{k+\frac{3}{2}}\norm{u}_{k+1,\infty},
\end{align}
where $C$ is independent of $h$.

Consider the Legendre expansion
\begin{align*}
		u(x,t) = \sum_{m=0}^{\infty}u_{j,m}(t)\lenj{m}(x),\quad u_{j,m}(t) = \frac{2m+1}{h_j}(u,\lenj{m})_j,
\end{align*}
where $L_{j,m}(x)$ denotes the rescaled Legendre polynomial of degree $m$ in $\ce$.
It follows from the orthogonality of $\pt u$ in \eqref{ggr_pro_1} that
\begin{align}\notag
		\pt u  (x,t) = \sum_{m=0}^{k}u_{j,m}(t)\lenj{m}(x) + \bar{u}_{j,k}(t)\lenj{k}(x),
\end{align}
and $\bar{u}_{j,k}$ can be determined by \eqref{ggr_pro_2}. Consequently,
\begin{align}\label{v-ptv}
		(u-\pt u ) (x,t) = -\bar{u}_{j,k}(t)\lenj{k}(x) + \sum_{m=k+1}^{\infty}u_{j,m}(t)\lenj{m}(x).
\end{align}
The Bramble--Hilbert lemma and scaling arguments yield
\begin{align}\label{vjk esti}
\mo{\bar{u}_{j,k}} \lesssim h^{k+1}\normki{u},
\end{align}
which will be used later in the superconvergence analysis of correction functions in Section \ref{correction function}.

In the construction of correction functions, the following integral operator is useful, as defined in \cite{cao2017upwind-biased}
\begin{equation}\label{dxi}
	D^{-1}_x v(x) = \frac{1}{\bar{h}_j}\int_{x_{\jm}}^{x}v(\tilde{x})\mathrm{d} \tilde{x},\quad \tilde{x} \in \ce.
\end{equation}
Clearly, $ v(x) = \bar{h}_j\leri{\di v(x)}_x $.
If $ v $ is taken as  $ L_{j,m}$, we have, by the recurrence relationships of Legendre polynomials,
\begin{align}\label{inv operator}
\di L_{j,m}(x) = \frac{1}{2m+1}\left(L_{j,m+1}-L_{j,m-1}\right)(x).
\end{align}

In the analysis of generalized fluxes with different weights, the following generalized skew-symmetry property is needed, as shown in \cite{lijia2020analysisKdV}. It reads, for $w,v\in H^1(\ih)$ and weights $\theta_1, \theta_2$,
\begin{align}\label{DG operator}
		\bb{H}^{\theta_1}(w,v) + \bb{H}^{\theta_2}(v,w) &= ( \tilde{\theta}_2  - \theta_1)\sum_{j=1}^N\jump{w}_{\jm}\jump{v}_{\jm}.
\end{align}
Also, the discrete Poincar\'{e} inequality in \cite{brenner2003poincare} is helpful in deriving superconvergent initial error estimates. That is, for $ \zeta \in H^1(\ih)$, there holds
\begin{align}\label{dp}
		\norms{\zeta} \le C\Big( \mos{\zeta}_{H^1(\ih)} + h^{-1}\jumpmos{\zeta} + \big|\intid{\zeta}\big| \Big),
\end{align}
where $ \mos{\zeta}_{H^1(\ih)} = \sumjn \mos{\zeta}_{H^1(\ce)} $, $\jumpmo{\zeta}$ has been defined in \eqref{semi} and $ C $ is independent of $ h$.
Moreover, the following inverse inequalities are necessary, namely for $ v\in V_h $
\begin{equation}\label{inv}
\norm{v_x} \le C h^{-1}\norm{v},\quad
\norm{ v}_{\Gamma_{\!h}} \le C h^{-\ff}\norm{v},\quad
\norm{ v}_{\infty} \le C h^{-\ff}\norm{v},
\end{equation}
where $C$ is independent of $h$.

Let us finish this section by showing a lemma concerning the relationship between the $L^2$ norm of derivative, the jump seminorm of LDG solutions and the $L^2$ norm of the adjacent auxiliary variable, when the LDG scheme with generalized numerical fluxes is considered. Using the same approach as that in \cite{wang2019implicit}, we have the following lemma.
\begin{lemma}\label{derivative jump}
Assume that $u_h, p_h, q_h, r_h$ are solutions to the LDG scheme \eqref{num_lin_4_LDG} with generalized numerical fluxes \eqref{gflux_lin_4_LDG}. Then, the following relationships hold
\begin{subequations}\label{l123}
\begin{align}
\norm{(q_h)_x} + h^{-\ff}\jumpmo{q_h} &\lesssim \norm{r_h},\label{l1} \\
\norm{(p_h)_x} + h^{-\ff}\jumpmo{p_h} &\lesssim \norm{q_h},\label{l2} \\
\norm{(u_h)_x} + h^{-\ff}\jumpmo{u_h} &\lesssim \norm{p_h}. \label{l3}
\end{align}
\end{subequations}
\end{lemma}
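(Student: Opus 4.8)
The three bounds in \eqref{l123} are structurally identical: \eqref{l1}, \eqref{l2} and \eqref{l3} come from testing \eqref{LDG2}, \eqref{LDG3} and \eqref{LDG4}, and differ only in the weight ($\lambda$, $\tlam$ or $\theta$) and in the names of the two variables. The plan is therefore to prove \eqref{l3} in full and recover the other two by replacing $(u_h,p_h,\theta)$ with $(q_h,r_h,\lambda)$ for \eqref{l1} and with $(p_h,q_h,\tlam)$ for \eqref{l2}, using in each case that the relevant weight differs from $\ff$ (note $\tlam=1-\lambda\neq\ff$). Starting from \eqref{LDG4}, I would sum over $j$ and integrate by parts cellwise. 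Writing $\jump{u_h}=u_h^+-u_h^-$ and using $w^- - w^{(\theta)}=-\tilde\theta\jump{w}$, $\,w^{(\theta)}-w^+=-\theta\jump{w}$ together with periodicity, this produces the identity
\begin{align}\label{plan-identity}
(p_h,\zeta) = ((u_h)_x,\zeta) + \sum_{e}\jump{u_h}_e\big[\tilde\theta\,\zeta^-_e + \theta\,\zeta^+_e\big],\qquad\forall\,\zeta\in V_h,
\end{align}
where $e$ ranges over the cell interfaces. Everything then reduces to extracting $\norm{(u_h)_x}$ and $h^{-\ff}\jumpmo{u_h}$ from \eqref{plan-identity} via suitable choices of $\zeta$.

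The naive route is to take $\zeta=(u_h)_x$ for the volume term and a boundary-layer function for the jumps. Testing with $\zeta=(u_h)_x\in V_h$ and bounding the interface term by the inverse trace inequality \eqref{inv} gives $\norm{(u_h)_x}\lesssim\norm{p_h}+h^{-\ff}\jumpmo{u_h}$, while any jump-extracting test function appears to produce $h^{-\ff}\jumpmo{u_h}\lesssim\norm{p_h}+\norm{(u_h)_x}$. This pair is circular, and since the constants are fixed inverse-inequality constants they cannot be absorbed into one another. \emph{Resolving this coupling is the main obstacle}: the jump has to be controlled \emph{on its own}, without the derivative reappearing on the right-hand side.

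I would achieve this by exploiting orthogonality. Choose $\zeta=\eta$ with $\eta|_{\ce}=c_j\lenj{k}$, so that $\eta$ lies in the top Legendre mode on every cell; then $\eta\in V_h$ and, crucially, $((u_h)_x,\eta)=0$ because $(u_h)_x|_{\ce}\in P^{k-1}(\ce)$ is $L^2$-orthogonal to $\lenj{k}$. Since $\lenj{k}$ equals $1$ at the right node of $\ce$ and $(-1)^k$ at the left node, identity \eqref{plan-identity} collapses to
\begin{align}\label{plan-circ}
\sum_{j=1}^N \jump{u_h}_{\jp}\big[\tilde\theta\,c_j + \theta(-1)^k c_{j+1}\big] = (p_h,\eta),
\end{align}
with indices read periodically. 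The bracket couples consecutive coefficients through the \emph{circulant} matrix $M=\tilde\theta\,I+\theta(-1)^k\Sigma$, where $\Sigma$ is the cyclic shift. Its eigenvalues $\tilde\theta+\theta(-1)^k\omega^m$ ($\omega=e^{2\pi i/N}$) are nonzero precisely because $\theta\neq\ff$ forces $|\tilde\theta|\neq|\theta|$, so $M^{-1}$ is bounded independently of $N$. Solving $M\vec c=\vec J$ with $\vec J=(\jump{u_h}_{\jp})_j$ makes the left side of \eqref{plan-circ} equal to $\jumpmos{u_h}$ while keeping $\sum_j c_j^2\lesssim\jumpmos{u_h}$; bounding the right side by $\sum_j|c_j|\,|(p_h,\lenj{k})_j|\lesssim(\sum_j c_j^2)^{\ff}h^{\ff}\norm{p_h}$ and cancelling one factor of $\jumpmo{u_h}$ yields the clean bound $h^{-\ff}\jumpmo{u_h}\lesssim\norm{p_h}$.

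With the jump controlled, the derivative bound is immediate: testing \eqref{plan-identity} with $\zeta=(u_h)_x$, estimating the interface term by the inverse trace inequality in \eqref{inv}, and then inserting $h^{-\ff}\jumpmo{u_h}\lesssim\norm{p_h}$, gives $\norm{(u_h)_x}\lesssim\norm{p_h}$, so that \eqref{l3} follows. I expect the circulant inversion to be the only place where the hypothesis $\theta,\lambda\neq\ff$ is genuinely used; quasi-uniformity enters merely through \eqref{inv} and the scaling $\norm{\lenj{k}}_{\ce}\sim h^{\ff}$, and periodicity is what makes the coupling matrix exactly circulant.
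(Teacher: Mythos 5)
Your argument is correct and complete. Note first that the paper itself contains no proof of Lemma \ref{derivative jump}: it simply asserts the result "using the same approach as that in" the cited implicit--explicit LDG reference, so there is no in-text proof to compare against line by line. Your route is a legitimate self-contained substitute, and you correctly diagnose the one real difficulty: for generalized fluxes both traces $\zeta^{\pm}$ appear at every interface, so the naive local jump-extraction reintroduces $\norm{(u_h)_x}$ with an unabsorbable constant. Your fix --- testing with $\zeta|_{\ce}=c_j\lenj{k}$ so that $((u_h)_x,\zeta)=0$ by orthogonality, and then inverting the resulting circulant system $\tilde\theta I+\theta(-1)^k\Sigma$, whose eigenvalues are bounded away from zero uniformly in $N$ precisely because $\theta\neq\ff$ --- is sound, and it is in fact the same circulant-matrix mechanism the paper uses in Lemma \ref{pro cf} to decouple the globally coupled correction functions, so it fits the periodic setting naturally. (In the cited reference the fluxes are purely alternating, one trace drops out, and the jump extraction is local; your global step is the necessary generalization.) The remaining estimates check out: $\norm{\lenj{k}}_{\ce}\sim h^{\ff}$ gives $h^{-\ff}\jumpmo{u_h}\lesssim\norm{p_h}$, and feeding this back into the test $\zeta=(u_h)_x$ with the inverse trace inequality \eqref{inv} closes \eqref{l3}; the substitutions you indicate give \eqref{l1}--\eqref{l2} since $\lambda,\tlam\neq\ff$. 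The only caveat worth recording is that the argument as written presupposes $k\ge 1$ (for $k=0$ the statement is trivial for the derivative term) and uses periodicity essentially, which is consistent with the setting of Section \ref{scheme}.
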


\section{Correction functions and interpolation functions}\label{main body}
To obtain superconvergence property, we construct correction functions in Section \ref{correction function}, and establish a superconvergent bound of interpolation errors in Section \ref{rhs}.
\subsection{Correction functions}\label{correction function}
We start by constructing a series of functions  $\wi{u}$, $\wi{p}$, $\wi{q}$ and $\wi{r} \in V_h$, $i\in \z{k}$ satisfying
\begin{subequations}\label{wi_lin_4}
	\begin{align}
		(\wi{u}-\bhj \di \wimo{p},\phi)_j &= 0,& &\left(\wi{u}\right)^{\thet}_{\jp} = 0,\label{wi_lin_4_1}\\
		(\wi{p}-\bhj \di \wimo{q},\phi)_j& = 0,& &\left(\wi{p}\right)^{\tla}_{\jp} = 0,\label{wi_lin_4_2}\\
		(\wi{q}-\bhj \di \wimo{r},\phi)_j& = 0,& &\left(\wi{q}\right)^{\la}_{\jp} = 0,
		\label{wi_lin_4_3}\\
		( \wi{u} +  \wi{p} + \wi{r}+\bhj \di \partial_t \wimo{u},\phi)_j &= 0,& & \left(\wi{r}\right)^{\tthet}_{\jp}  =0 \label{wi_lin_4_4}
	\end{align}
\end{subequations}
for $ \phi \in P^{k-1}(\ce) $ and $j\in \z{N}$, where
\begin{align}\label{wi initial}
	\wz{u} = u-\pt u,\quad \wz{p} = p-\ptl p,\quad \wz{q} = q-\pl q,\quad \wz{r} = r-\ptt r.
\end{align}
Then, for $\ell\in \z{k}$, the correction functions are defined by
\begin{align}\label{wl}
	\wl{v}  = \sum_{i =1}^{\ell} \wi{v}, ~~v = u, p, q, r.
\end{align}
Superconvergence of $\wi{v}$ is shown in the following lemma.

\begin{lemma}\label{pro cf}		
The functions $\wi{v}$, $v = u,p,q,r$, $ i\in \z{k} $ defined by \eqref{wi_lin_4}--\eqref{wi initial} are uniquely determined and satisfy
\begin{subequations}\label{pro cf fom}
\begin{align}
\norm{\partial^n_t \wi{v}}_{\infty} &\lesssim h^{k+i+1}\norm{\partial^n_t v}_{k+i+1,\infty},\quad n=0,1,\label{pro of correction function 1}\\
(\partial^n_t \wi{v},\psi)_j &= 0,\quad \forall \psi\in P^{k-i-1}(\ce), \quad j\in \z{N}. \label{pro of correction function 2}
\end{align}
\end{subequations}
\end{lemma}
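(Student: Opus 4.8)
The plan is to prove all three assertions together by a single induction on $i$, exploiting the sequential structure of \eqref{wi_lin_4}. Within each level $i$, equations \eqref{wi_lin_4_1}--\eqref{wi_lin_4_3} determine $\wi{u},\wi{p},\wi{q}$ from the level-$(i-1)$ data $\wimo{p},\wimo{q},\wimo{r}$, after which \eqref{wi_lin_4_4} determines $\wi{r}$ from the now-known $\wi{u},\wi{p}$ and $\partial_t\wimo{u}$. The base case $i=0$ is handled by \eqref{wi initial}: the orthogonality \eqref{pro of correction function 2} is exactly the GGR relation \eqref{ggr_pro_1} (which commutes with $\partial_t$), and the bound \eqref{pro of correction function 1} is the $L^\infty$ estimate read off from \eqref{pt error}. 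Each of \eqref{wi_lin_4_1}--\eqref{wi_lin_4_4} is a GGR-type solve of one template — match the $P^{k-1}(\ce)$-projection of a prescribed right-hand side and impose one weighted interface condition — so I would set up that template once and apply it four times per level.

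For existence and uniqueness, expand $\wi{u}|_{\ce}=\sum_{m=0}^{k}a_{j,m}\lenj{m}$. The orthogonality in \eqref{wi_lin_4_1} forces $a_{j,0},\dots,a_{j,k-1}$ to equal the first $k$ Legendre coefficients of $\bhj\di\wimo{p}$, leaving only the top coefficients $\{a_{j,k}\}_{j=1}^{N}$ undetermined. Using $\lenj{m}(\xjp)=1$ and $\lenj{m}(\xjm)=(-1)^m$, the interface condition $\left(\wi{u}\right)^{\thet}_{\jp}=0$ reduces to $\theta a_{j,k}+(-1)^k\tilde\theta\,a_{j+1,k}=b_j$ with known $b_j$ and periodic indexing $a_{N+1,k}=a_{1,k}$. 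This is a two-diagonal circulant system whose eigenvalues $\theta+(-1)^k\tilde\theta\,\omega^j$, $\omega=e^{2\pi\mathrm{i}/N}$, have modulus bounded below by a positive constant depending only on $\theta$ precisely because $\theta\neq\ff$; hence the system is uniquely solvable with an $h$-independent bound $\max_j|a_{j,k}|\lesssim\max_j|b_j|$. The counterparts for \eqref{wi_lin_4_2}--\eqref{wi_lin_4_4} use $\lambda\neq\ff$. I expect this global coupling, and the need to keep the stability constant uniform in $N$ through the circulant eigenvalues, to be the main obstacle — exactly the difficulty flagged in the introduction.

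The orthogonality \eqref{pro of correction function 2} follows from the degree-shift recorded in \eqref{inv operator}. By the inductive hypothesis $\wimo{p}\perp P^{k-i}(\ce)$, so its Legendre expansion starts at degree $k-i+1$; then \eqref{inv operator} shows $\di\wimo{p}$ starts at degree $k-i$, whence $\bhj\di\wimo{p}\perp P^{k-i-1}(\ce)$. Testing \eqref{wi_lin_4_1} against $\psi\in P^{k-i-1}(\ce)\subseteq P^{k-1}(\ce)$ gives $(\wi{u},\psi)_j=0$, and the same reasoning covers $\wi{p},\wi{q}$. For $\wi{r}$ I would use that $\wi{u},\wi{p}$ (just proved) and $\bhj\di\partial_t\wimo{u}$ are all orthogonal to $P^{k-i-1}(\ce)$, the last because the hypothesis applied with one time derivative gives $\partial_t\wimo{u}\perp P^{k-i}(\ce)$. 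Since every operator here commutes with $\partial_t$, the statement for $\partial_t^n\wi{v}$ is identical.

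For the bound \eqref{pro of correction function 1} I would induct on $i$ again. The template yields the $L^\infty$-stability $\norm{\wi{u}}_{\infty}\lesssim\norm{\bhj\di\wimo{p}}_{\infty}$, the $P^{k-1}$-projection being bounded on each cell by equivalence of norms and the top coefficient controlled through $\max_j|a_{j,k}|\lesssim\max_j|b_j|$. Since $\bhj\di v(x)=\int_{\xjm}^{x}v$ integrates over a cell, it gains one power of $h$, so that $\norm{\bhj\di\wimo{p}}_{\infty}\lesssim h\norm{\wimo{p}}_{\infty}\lesssim h^{k+i+1}\norm{p}_{k+i,\infty}$ by the level-$(i-1)$ estimate; converting through $p=u_x$ gives $\norm{\wi{u}}_{\infty}\lesssim h^{k+i+1}\norm{u}_{k+i+1,\infty}$, with $\wi{p},\wi{q}$ treated identically via $q=u_{xx}$, $r=u_{xxx}$. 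For $\wi{r}$ the additional term $\bhj\di\partial_t\wimo{u}$ introduces a time derivative, which I would convert to spatial derivatives through the governing equation \eqref{eq_lin_4_-a}, so that the contributions collapse into $h^{k+i+1}\norm{r}_{k+i+1,\infty}$. The $n=1$ estimate is obtained by differentiating \eqref{wi_lin_4} in $t$ and repeating the argument, once more trading time derivatives for spatial ones via \eqref{eq_lin_4_-a}.
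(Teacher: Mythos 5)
Your proof follows the paper's own argument in essentially every respect: induction on $i$; a local Legendre expansion in which the orthogonality parts of \eqref{wi_lin_4_1}--\eqref{wi_lin_4_4} fix the lower coefficients as the Legendre coefficients of $\bhj\di\wimo{p}$ (etc.) via \eqref{inv operator}, leaving only the top coefficients coupled through a periodic two-diagonal circulant system; the degree-shift argument for the orthogonality \eqref{pro of correction function 2}; the extra power of $h$ gained from the cell integral in $\di$ for the bound \eqref{pro of correction function 1}; the triangular ordering within each level so that \eqref{wi_lin_4_4} is solved last; and the use of the governing equation to convert $\partial_t\wimo{u}$ into spatial data in the estimate for $\wi{r}$. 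The base case via the GGR properties \eqref{ggr_pro_1} and \eqref{pt error} and the $n=1$ statement by differentiating in $t$ are also handled as in the paper.

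The one place where your justification does not deliver what you assert is the stability of the circulant solve. The lower bound on the eigenvalues $\theta+(-1)^k\tilde\theta\,\omega^j$ proves invertibility and gives $\Vert A^{-1}\Vert_{\ell^2\to\ell^2}\le C$ uniformly in $N$, but it does not by itself yield $\max_j|a_{j,k}|\lesssim\max_j|b_j|$ with an $N$-independent constant: passing through the $\ell^2$ bound costs a factor $\sqrt{N}=O(h^{-1/2})$, which would degrade \eqref{pro of correction function 1} to $h^{k+i+1/2}$ and ultimately spoil the $(2k+1)$th-order rates downstream. What is needed is an $N$-uniform bound on the $\ell^\infty\to\ell^\infty$ norm of $A^{-1}$, i.e., on its absolute row sums. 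The paper obtains this by writing the inverse explicitly, $A^{-1}=\frac{1}{\sigma(1-\mu_\sigma^N)}\mathrm{circ}\left(1,\mu_\sigma,\ldots,\mu_\sigma^{N-1}\right)$ with $\mu_\sigma=(-1)^{k+1}\tilde{\sigma}/\sigma$, and bounding the resulting geometric sum $\sum_{m=0}^{N-1}|\mu_\sigma|^m/|1-\mu_\sigma^N|$ by a constant depending only on $\big|1-|\mu_\sigma|\big|$, which is positive exactly because $\sigma\ne\ff$. Since you have already identified the circulant structure and its explicit inverse is standard, this is a short repair, but it is the step your eigenvalue argument skips.
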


\begin{proof}
We prove this lemma by induction. For $i\in \z{k}$, denote
\begin{subequations}\label{wi_rep}
\begin{align*}
	\wice{v} = \sum_{m=0}^{k} v^i_{j,m} \lenj{m}(x), \quad v = u, p, q, r,
\end{align*}
\end{subequations}
where $v^i_{j,m}$ are coefficients to be determined later.

\textbf{Step 1:} For $i = 1$, choosing $v = \lenj{m}$ ($j\in \z{N}$) in \eqref{wi_lin_4_1} with $m\le k-1$, and taking into account \eqref{wi initial} together with \eqref{inv operator}, we arrive at the following equality
\begin{align*}
\left(\wo{u} - \bhj \di \wz{p},\ \lenj{m}\right)_j  = \left(\sum_{m=0}^{k} u^1_{j,m} \lenj{m}  +  \frac{\bhj \bar{p}_{j,k}}{2k+1} \left(L_{j,k+1}-L_{j,k-1}\right), ~  \lenj{m}\right)_j = 0,
\end{align*}
where $ \bar{p}_{j,k} $ is the coefficient in \eqref{v-ptv} with $ u $, $ \theta $ replaced by $ p $, $\tlam$, respectively.
Using the same procedure for \eqref{wi_lin_4_2}--\eqref{wi_lin_4_4},
we obtain, by the orthogonality of Legendre polynomials, the expression
\begin{align}\label{aj1k-1}
\wo{v}\vertce = \sum_{m=k-1}^{k} v^1_{j,m} \lenj{m}(x),\quad v = u, p, q, r,
\end{align}
where
\begin{align}\label{aj1k-2}
u^1_{j,k-1} = \frac{\bhj \bar{p}_{j,k}}{2k+1},
p^1_{j,k-1} = \frac{\bhj \bar{q}_{j,k}}{2k+1},
q^1_{j,k-1} = \frac{\bhj \bar{r}_{j,k}}{2k+1},
r^1_{j,k-1} = -\frac{\bhj \left((\bar{u}_t)_{j,k} + \bar{p}_{j,k} +  \bar{q}_{j,k} \right) }{2k+1}.
\end{align}

In what follows, let us concentrate on  $v^1_{j,k}$, for $v = u, p, q, r$ and $j\in \z{N}$. Using the boundary collocations in \eqref{wi_lin_4} and the fact that $\lenj{m}(\xjpp) = 1$, $\lenj{m}(\xjmm) = (-1)^m$, we get
\begin{align}\label{sys i=1}
		\sigma v_{j,k}^{1} + (-1)^k \tilsig v^{1}_{j+1,k} = (-1)^k \tilsig v^{1}_{j+1,k-1} -  \sigma v_{j,k-1}^{1} \triangleq \kappa_v^j,
\end{align}
where $v_{N+1,k}^{1} = v_{1,k}^{1}$ and $ \sigma = \theta,\tlam,\lambda,\tilth $.
Consequently, the linear system \eqref{sys i=1} can be written in the matrix-vector form
\begin{align}\label{matri}
		A_{v}\vec{v}^1_{k} = \vec{\kappa}_v,
\end{align}
where
\begin{align*}
	\vec{v}^1_{k} &= \left(v^1_{1,k}, v^1_{2,k}, \ldots, v^1_{N,k}\right)^\top, \quad v = u, p, q, r,
\end{align*}
and
\begin{align*}
	A_{v} &= \text{circ}\Big(\sigma, (-1)^k\tilsig, 0, \ldots, 0\Big), \quad v = u,p,q,r, \quad \sigma = \theta,\tlam,\lambda,\tilth
\end{align*}
is an $N\times N$ circulant matrix and $\vec{\kappa}_v = \left(\kappa_v^1,\kappa_v^2,\ldots, \kappa_v^N\right)^\top.$
The determinant of  $A_{\sigma}$ is
\begin{align*}
	\mo{A_{v}} = \sigma^N(1-\mu_\sigma^N),\quad v = u,p,q,r,\quad \sigma = \theta,\tlam,\lambda,\tilth,
\end{align*}
where $ \mu_{\sigma} = (-1)^{k+1}\tilsig/\sigma $. Thus, $A_{v}$ is always invertible for $\sigma \ne\ff$ and the linear system \eqref{matri} has unique solutions. Moreover, the inverse of $A_{v}$,
\begin{align*}
A_{v}^{-1} = \frac{1}{\sigma(1-\mu_\sigma^N)}\text{circ}\left( 1,\mu_{\sigma},\mu_\sigma^2,\ldots,\mu_\sigma^{N-1} \right)
\end{align*}
is also circulant.
After a direct calculation, we have
\begin{align*}
v^1_{j,k} = \frac{1}{\sigma(1-{\mu_{\sigma}}^N)} \sum^N_{m=1} \rho_{j,m}\kappa_v^m, \quad j\in \z{N},
\end{align*}
where $ \{\rho_{j,m}\}^N_{m=1} $ are entries of the $ j $-th row of $ \text{circ}\left( 1,\mu_{\sigma},{\mu_{\sigma}}^2,\ldots,{\mu_{\sigma}}^{N-1} \right) $. By \eqref{vjk esti}, \eqref{aj1k-2}--\eqref{sys i=1}, we get
\begin{align*}
	\kappa_v^j & \le  \mo{ (-1)^k \tilsig v^{1}_{j+1,k-1} - \sigma v_{j,k-1}^{1}}
	\lesssim h^{k+2} \norm{v}_{k+2,\infty }, \nonumber \\
v^1_{j,k} & \le  \frac{\mo{\kappa_v^j}}{\sigma(1-\mu_\sigma^N)}\cdot \frac{1-\mu_\sigma^N}{1-\mu_\sigma}
	=  \frac{\mo{\kappa_v^j}}{\sigma(1-\mu_{\sigma})}
	\lesssim h^{k+2} \norm{v}_{k+2,\infty}.
\end{align*}
This, together with \eqref{aj1k-1} and \eqref{vjk esti}, produces
\begin{align*}
\normice{\pnt \wo{u}} &\lesssim h \mo{\pnt \bar{p}_{j,k}}\lesssim h^{k+2}\norm{\pnt p}_{k+1,\infty,\ce}\lesssim h^{k+2}\norm{\pnt u}_{k+2,\infty},\\
\normice{\pnt \wo{p}} &\lesssim h \mo{\pnt \bar{q}_{j,k}}\lesssim h^{k+2}\norm{\pnt q}_{k+1,\infty,\ce}\lesssim h^{k+2}\norm{\pnt p}_{k+2,\infty},\\
\normice{\pnt \wo{q}} &\lesssim h \mo{\pnt \bar{r}_{j,k}}\lesssim h^{k+2}\norm{\pnt r}_{k+1,\infty,\ce}\lesssim h^{k+2}\norm{\pnt q}_{k+2,\infty},\\
\norm{\pnt \wo{r}}_{\infty,I_j} &
\lesssim h\left(\mo{\pnt\bar{p}_{j,k}} + \mo{\pnt\bar{q}_{j,k}} + \mo{\pnt(\bar{u}_t)_{j,k}}\right)\\
	&\lesssim h^{k+2}\left(\norm{\pnt p}_{k+1,\infty,\ce} + \norm{\pnt q}_{k+1,\infty,\ce} + \norm{\pnt u_t}_{k+1,\infty,\ce}\right)\\
	&\lesssim h^{k+2}\norm{\pnt r}_{k+2,\infty},
\end{align*}
and thus \eqref{pro cf fom} holds for $i=1$.

\textbf{Step 2:} Assume that \eqref{pro cf fom} is valid for $i \le k-1$ and we want to show it still holds for $i+1$. By induction hypothesis together with an argument similar to that in deriving \eqref{aj1k-1}, we have
\begin{align*}
\left.\wi{v}\right|_{\ce} = \sum_{m=k-i}^k v^i_{j,m}, \lenj{m}(x), \quad v=u, p, q, r.
\end{align*}
Choosing $v = \lenj{m}$, $m\le k-1$, $ j \in \z{N} $ in \eqref{wi_lin_4_1} and recalling \eqref{inv operator}, we obtain
\begin{align*}
 \left(\wipo{u} - \bhj \di \wi{p},\ \lenj{m}\right)_j
=  \left(\sum_{m=0}^{k} u^{i+1}_{j,m} \lenj{m} - \bhj \sum_{m = k-i}^{k}\frac{p^i_{j,m} \left(L_{j,m+1}-L_{j,m-1}\right) }{2m+1} ,\lenj{m}\right)_j =0.
\end{align*}
Using the same procedure for \eqref{wi_lin_4_2}--\eqref{wi_lin_4_4}, we get, by the orthogonality of Legendre polynomials,
\begin{align*}
	\wipo{v}\vertce = \sum_{m=k-i-1}^{k} v^{i+1}_{j,m} \lenj{m}(x), \quad v = u,p,q,r
\end{align*}
where
\begin{align*}
v^{i+1}_{j,k-i-1} &= -\frac{\bhj \hat{v}^i_{j,k-i}}{2(k-i)+1},& &v = u,p,q,\\
v^{i+1}_{j,k-i} &= -\frac{\bhj \hat{v}^i_{j,k-i+1}}{2(k-i)+3},& &\hat{v} = p,q,r,\\
v^{i+1}_{j,m} &= - \frac{\bhj  \hat{v}^i_{j,m+1}}{2m+3} + \frac{ \bhj \hat{v}^i_{j,m-1}}{2m-1},& &k-i+1 \le m\le k-1,
\end{align*}
and
\begin{align*}
	r^{i+1}_{j,k-i-1} &=  \bhj \frac{ (u_t)^i_{j,k-i} + p^i_{j,k-i} + q^i_{j,k-i} }{2(k-i)+1},\\
	r^{i+1}_{j,k-i}  &=  \bhj \frac{  (u_t)^i_{j,k-i+1} + p^i_{j,k-i+1} + q^i_{j,k-i+1}  }{2(k-i)+3}  ,\\
	r^{i+1}_{j,m}  & = \bhj \frac{(u_t)^i_{j,m+1} + p^i_{j,m+1} + q^i_{j,m+1} }{2m+3} \\
& \ind - \bhj \frac{(u_t)^i_{j,m-1} + p^i_{j,m-1} + q^i_{j,m-1} }{2m-1},  k-i+1 \le m\le k-1.
\end{align*}	
Thus, we obtain the following system
\begin{align}\label{sysio}
	\sigma v^{i+1}_{j,k} + \tilsig  (-1)^k v^{i+1}_{j+1,k} =  -\sigma \sum_{m=k-i-1}^{k-1} v^{i+1}_{j,m}  -\tilsig \sum_{m=k-i-1}^{k-1}(-1)^{m} v^{i+1}_{j+1,m},
\end{align}
where $v_{N+1,k}^{i+1} = v_{1,k}^{i+1}$, $v = u,p,q,r$ and $ \sigma = \theta,\tlam,\lambda,\tilth $, respectively. Again, the above linear system can be rewritten as a matrix-vector form as that in \eqref{matri}. When $\sigma \ne \ff$, we can establish the uniqueness, existence of \eqref{sysio} and obtain
\begin{align}\label{klee}
	v^{i+1}_{j,k} \lesssim h^{k+i+2}\norm{v}_{k+i+2,\infty}.
\end{align}
It is easy to show, for $\pnt \wipo{r}$ with $n=0,1$, that
\begin{align*}
\norm{\pnt \wipo{r}}_{\infty,I_j}
&\lesssim \sum_{m=k-i-1}^k \mo{\pnt r^{i+1}_{j,m}} \\
&\lesssim h \left(\sum_{m=k-i}^k \mo{\pnt (u_t)^i_{j,m}} + \sum_{m=k-i}^k \mo{\pnt p^i_{j,m}} + \sum_{m=k-i}^k \mo{\pnt q^i_{j,m}} \right)\\
&\lesssim h^{k+i+2} \left( \norm{\pnt u_t}_{k+i+1,\infty}  + \norm{\pnt  p}_{k+i+1,\infty}  + \norm{\pnt q}_{k+i+1,\infty} \right)\\
&\lesssim h^{k+i+2}\norm{\pnt r}_{k+i+2,\infty} .
\end{align*}
Analogously,
\begin{align}\notag
\norm{\pnt \wipo{u}}_{\infty,I_j} & \lesssim h^{k+i+2}\norm{\pnt u}_{k+i+2,\infty},\\\notag
\norm{\pnt \wipo{p}}_{\infty,I_j} & \lesssim h^{k+i+2}\norm{\pnt p}_{k+i+2,\infty},\\\notag
\norm{\pnt \wipo{q}}_{\infty,I_j} & \lesssim h^{k+i+2}\norm{\pnt q}_{k+i+2,\infty}.
\end{align}
Therefore, \eqref{pro cf fom} is valid for $i+1$ with $v = u, p, q, r$. This finishes the proof of Lemma \ref{pro cf}.
\end{proof}

\subsection{A superconvergent bound of interpolation errors}\label{rhs}
By the LDG scheme \eqref{num_lin_4_LDG} and Galerkin orthogonality, we get error equations
\begin{subequations} \label{orthj}
\begin{align}\label{orthj1}
		(\eut ,v)_j  - \hjop{\theta}{e_u}{v} - \hjop{\tlam}{e_p}{v} - \hjop{\tilth}{e_r}{v} &= 0, \\ \label{orthj2}
		(e_r,\phi)_j + \hjop{\lambda}{e_q}{\phi} &= 0, \\ \label{orthj3}
		(e_q,\psi)_j + \hjop{\tlam}{e_p}{\psi} &= 0, \\ \label{othj4}
		(e_p,\zeta)_j + \hjop{\theta}{e_u}{\zeta} &= 0,
\end{align}
\end{subequations}
where $ e_v = v - v_h $ with $ v = u,p,q,r $. We use the following decomposition
\begin{subequations}
\begin{align}\label{error_lin_4}
	e_v  = v - \il{v} + \il{v} - v_h &\triangleq \epsilon_v + \bar{e}_v,
\end{align}
and introduce the interpolation function
\begin{equation}\label{vil}
\il{v} = \psig v - \wl{v} ~~(\ell\in \z{k})
\end{equation}
\end{subequations}
with the GGR projection $\psig v$  ($\sigma = \theta,\tlam,\lambda,\tilth$) given in \eqref{ggr_pro} and the correction function $ \wl{v}$ defined in \eqref{wl}.

A superconvergent bound of interpolation errors is presented in the following lemma.
\begin{lemma}\label{pro of interpolation functions}
Assume that $u$ is the exact solution of \eqref{eq_lin_4_}, which is sufficiently smooth, e.g., $u \in W^{k+\ell+4,\infty}(\ih)$ and $u_t \in W^{k+\ell+1,\infty}(\ih)$. For $\ell\in \z{k}$, $\il{v}$ ($v = u,p,q,r$) are the interpolation functions defined in \eqref{vil}. Then, for $\phi \in V_h$, we have
\begin{subequations}\label{u with uli}
\begin{align}
|\left(\epsilon_p, \phi\right)_j + \hjop{\theta}{\epsilon_u}{\phi}| &\lesssim h^{k+\ell +1}\norm{u}_{k+\ell+2,\infty}\norm{\phi}_{1,\ce} , \label{key1}\\
|\left(\epsilon_q, \phi\right)_j + \hjop{\tlam}{\epsilon_p}{\phi}|&\lesssim h^{k+\ell +1}\norm{u}_{k+\ell+3,\infty}\norm{\phi}_{1,\ce}, \label{key2}\\
|\left(\epsilon_r, \phi\right)_j +  \hjop{\lambda}{\epsilon_q}{\phi}| &\lesssim h^{k+\ell +1}\norm{u}_{k+\ell+4,\infty}\norm{\phi}_{1,\ce}, \label{key3}\\
|\left(\epsilonut, \phi\right)_j -  \hjop{\theta}{\epsilon_u}{\phi} -  \hjop{\tlam}{\epsilon_p}{\phi}- \hjop{\tiltheta}{\epsilon_r}{\phi}| &\lesssim h^{k+\ell +1}\norm{u_t}_{k+\ell+1,\infty}\norm{\phi}_{1,\ce}.\label{key4}
\end{align}
\end{subequations}
\end{lemma}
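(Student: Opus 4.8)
The plan is to expand each interpolation error through its correction-function decomposition and then collapse the resulting sums by telescoping, leaving a single top-order correction term that Lemma~\ref{pro cf} controls. Throughout I write $\epsilon_v=v-\il{v}=(v-\psig v)+\wl{v}=\sum_{i=0}^{\ell}\wi{v}$, where $\wz{v}=v-\psig v$ is the projection error (with $\sigma=\theta,\tlam,\lambda,\tilth$ according to $v=u,p,q,r$) and $w_v^{\ell}$ denotes the single $i=\ell$ summand. Two structural facts drive everything: matched collocation conditions make the flux terms inside each $\mathcal{H}_j$ vanish, and the defining relations \eqref{wi_lin_4} let the remaining volume terms telescope.

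Consider \eqref{key1}. The weight-$\theta$ conditions $(\wz{u})^{(\theta)}_{\jp}=0$ from \eqref{ggr_pro_2} and $(\wi{u})^{(\theta)}_{\jp}=0$ from \eqref{wi_lin_4_1} give $(\epsilon_u)^{(\theta)}_{\jp}=0$ for all $j$, so by \eqref{dgo} the boundary terms drop and $\hjop{\theta}{\epsilon_u}{\phi}=(\epsilon_u,\phi_x)_j$. Testing \eqref{wi_lin_4_1} with $\phi_x\in P^{k-1}(\ce)$ and integrating by parts through $\di$ — using $\wimo{p}=\bhj(\di\wimo{p})_x$, $\di\wimo{p}(\xjmm)=0$, and $\bhj\di\wimo{p}(\xjpp)=(\wimo{p},1)_j=0$ by \eqref{pro of correction function 2} — yields $(\wi{u},\phi_x)_j=-(\wimo{p},\phi)_j$ for $i\ge1$, while $(\wz{u},\phi_x)_j=0$ by the projection orthogonality \eqref{ggr_pro_1}. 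Summing over $i=0,\dots,\ell$ and reindexing, the intermediate terms cancel and $(\epsilon_p,\phi)_j+\hjop{\theta}{\epsilon_u}{\phi}=(w_p^{\ell},\phi)_j$. Lemma~\ref{pro cf} with $n=0$ bounds this by $h^{k+\ell+1}\norm{p}_{k+\ell+1,\infty}\norm{\phi}_{\ce}$; since $p=u_x$ and $\norm{\phi}_{\ce}\le\norm{\phi}_{1,\ce}$, \eqref{key1} follows. Estimates \eqref{key2} and \eqref{key3} are identical after replacing $(\wi{u},\wimo{p})$ by $(\wi{p},\wimo{q})$ and $(\wi{q},\wimo{r})$ and using $q=u_{xx}$, $r=u_{xxx}$.

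For \eqref{key4} I will reduce all three operators to their volume parts exactly as above (the weights $\theta,\tlam,\tilth$ now matching the projections of $u,p,r$) and group the remainder as $(\epsilonut,\phi)_j-(\epsilon_u+\epsilon_p+\epsilon_r,\phi_x)_j$. Here the fourth relation \eqref{wi_lin_4_4}, tested with $\phi_x$ and integrated by parts, gives $(\wi{u}+\wi{p}+\wi{r},\phi_x)_j=(\partial_t\wimo{u},\phi)_j$ for $i\ge1$ (the endpoint terms again vanishing by orthogonality to constants), while the $i=0$ block vanishes by \eqref{ggr_pro_1}. Telescoping against $(\epsilonut,\phi)_j=\sum_{i=0}^{\ell}(\partial_t\wi{u},\phi)_j$ leaves only $(\wllt{u},\phi)_j$, which Lemma~\ref{pro cf} with $n=1$ bounds by $h^{k+\ell+1}\norm{u_t}_{k+\ell+1,\infty}\norm{\phi}_{\ce}$, giving \eqref{key4}.

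The main obstacle is the simultaneous bookkeeping of two distinct families of vanishing boundary contributions: the flux terms inside each $\mathcal{H}_j$, which cancel only because the projection weight and the correction collocation weight agree, and the endpoint terms produced by the $\di$-integration by parts, which vanish because $\wimo{v}$ is orthogonal to constants — a property holding for $i\ge1$ via \eqref{pro of correction function 2} and for the $i=0$ block via the GGR relation \eqref{ggr_pro_1}. Keeping the projection-error block ($i=0$) separate from the correction blocks ($i\ge1$) and matching every weight correctly is where the argument is delicate; once that is in place, the telescoping and the appeal to Lemma~\ref{pro cf} are routine.
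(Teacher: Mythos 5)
Your proposal is correct and follows essentially the same route as the paper's proof: matched collocation weights annihilate the flux terms in each $\mathcal{H}_j$, the $D_x^{-1}$-integration by parts (with endpoint terms vanishing by orthogonality to constants) telescopes the volume terms, and the single surviving top-order correction term is bounded via Lemma \ref{pro cf}. The only difference is presentational—the paper writes out \eqref{key4} and declares \eqref{key1}--\eqref{key3} analogous, whereas you detail \eqref{key1} and sketch \eqref{key4}—and your sign and weight bookkeeping checks out.
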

\begin{proof}	
In what follows, let us show \eqref{key4} only, and proofs for \eqref{key1}--\eqref{key3} are analogous. To do that, for $v = u,p,q,r$ and $\sigma=\theta,\tilde{\lambda},\lambda,\tilth $, since $\epsilon_v = v - \il{v} = v- \psig v + \wl{v}$, we deduce from \eqref{dgo} and \eqref{wi_lin_4} that
$$
\hjop{\sigma}{\epsilon_v}{\phi} = \hjop{\sigma}{v- \psig v}{\phi} + \hjop{\sigma}{\wl{v}}{\phi} = \hjop{\sigma}{\wl{v}}{\phi}
=(\wl{v}, \phi_x)_j.
$$
Consequently, for the left hand side of \eqref{key4}, one has
\begin{subequations}
\begin{align}
\mathcal S & \triangleq \left(\epsilonut, \phi\right)_j -  \hjop{\theta}{\epsilon_u}{\phi} -  \hjop{\tlam}{\epsilon_p}{\phi}- \hjop{\tiltheta}{\epsilon_r}{\phi} \nonumber  \\
& = \left(\epsilonut, \phi\right)_j - \left( \wl{u} +  \wl{p} + \wl{r}, \phi_x\right)_j \nonumber \\
& = \Big(\big( \wz{u} +\sum_{i =1}^{\ell} \wi{u}\big)_t, \phi\Big)_j - \left( \wl{u} +  \wl{p} + \wl{r}, \phi_x\right)_j. \label{sa}
\end{align}
For $i\in \z{\ell}$, we now employ integration by parts and the definition of correction functions in \eqref{wi_lin_4_4} to conclude that
\begin{equation}\label{sb}
(\partial_t\wimo{u}, \phi)_j
		= \bhj \left(\left(\di \partial_t\wimo{u}\right)_x, \phi\right)_j = -\bhj (\di \partial_t \wimo{u}, \phi_x)_j = ( \wi{u} +  \wi{p} + \wi{r}, \phi_x)_j,
\end{equation}
\end{subequations}
where we have also used the fact that
\begin{align*}
\bhj \di  \partial_t\wimo{u}(\xjp^-) & = ( \partial_t\wimo{u},1)_j = 0, \\
 \bhj \di \partial_t \wimo{u}(\xjm^+) & = 0
\end{align*}
implied by the definition of integral operator in \eqref{dxi} and the orthogonality of $\partial_t \wimo{u}$ in \eqref{pro of correction function 2}.

If we now substitute \eqref{sb} with $i\in \z{\ell}$ into \eqref{sa}, we obtain
\begin{equation}\label{s}
\mathcal S = ( \partial_t \wll u, \phi)_j,
\end{equation}
which, in combination with \eqref{pro of correction function 1}, gives us
$$
|\mathcal S | \lesssim h^{k+\ell +1}\norm{u_t}_{k+\ell+1,\infty}\norm{\phi}_{1,\ce}.
$$
This completes the proof of \eqref{key4} and thus Lemma \ref{pro of interpolation functions}.
\end{proof}

\section{Superconvergence}\label{sc analysis}
In this section, we first introduce a suitable numerical initial condition satisfying superconvergent property in Section \ref{ini}, then show supercloseness between interpolation functions and LDG solutions in Section  \ref{sc}, and derive superconvergence concerning numerical flux, cell averages and generalized Radau points in Section \ref{superconvergence}.

By the error decomposition \eqref{error_lin_4} and using the same argument as that in deriving \eqref{s} in the proof of Lemma \ref{pro of interpolation functions}, we sum the error equations \eqref{orthj} over all $j$ to obtain
\begin{subequations} \label{orthde}
	\begin{align}\label{orthde1}
		(\bet{u},v)   - \hop{\theta}{\be{u}}{v}
		- \hop{\tlam}{\be{p}}{v} - \hop{\tiltheta}{\be{r}}{v} + (\partial_t \wll u,v) &=0, \\ \label{orthde2}
		(\be{r},\phi)  + \hop{\lambda}{\be{q}}{\phi}+ (\wll{r},\phi) &=0, \\ \label{orthde3}
		(\be{q},\psi)  + \hop{\tlam}{\be{p}}{\psi}+ (\wll{q},\psi) &=0, \\ \label{orthde4}
		(\be{p},\zeta)  + \hop{\theta}{\be{u}}{\zeta}+ (\wll{p},\zeta) &=0.
	\end{align}
\end{subequations}

\subsection{The numerical initial condition}\label{ini}
To be compatible with superconvergent property, a suitable choice of numerical initial condition is constructed as follows. For $u_0 \in W^{k+\ell+4, \infty}(\ih)$ and $u_t(0) \in W^{k+\ell+1, \infty}(\ih)$ ($\ell\in \z{k}$), choose
\begin{align}\label{ini con}
		r_h(x,0) = \il{r}(x,0) = \ptt r_0(x) - \wl{r}(x,0),\quad r_0 = \partial_x^3 u_0(x)
\end{align}
with $ u_h(x,0)$, $p_h(x,0)$ and $q_h(x,0)$ being the solutions to
\begin{subequations}\label{im}
	\begin{align}
		(r_h,\phi)_j + \hjop{\lambda}{q_h}{\phi} &= 0,\label{im1}\\
		(q_h,\psi)_j + \hjop{\tlam}{p_h}{\psi} &= 0,\label{im2}\\
		(p_h,\zeta)_j + \hjop{\theta}{u_h}{\zeta} &= 0,\label{im3}
	\end{align}
\end{subequations}
where $q_0 = \partial_x^2 u_0(x), p_0 = \partial_x u_0(x)$.
Existence, uniqueness as well as superconvergent initial error estimates for the above numerical initial condition are established in the following lemma.
\begin{lemma}\label{lemma sc}
Suppose that the initial condition $u_0$ is periodic satisfying $u_0 \in W^{k+\ell+4, \infty}(\ih)$ and $u_t(0) \in W^{k+\ell+1, \infty}(\ih)$. Assuming that interpolation functions $\il{v}$ and errors $\be{v}$ ($v = u,p,q,r$) are defined in \eqref{error_lin_4}--\eqref{vil} $(\ell \in \z{k})$, then the numerical initial conditions in \eqref{ini con}--\eqref{im} are uniquely determined and satisfy
\begin{align*}
	\norm{\be{u}(0)} + \norm{\bet{u}(0)} + \norm{\be{p}(0)} + \norm{\be{q}(0)} + \norm{\be{r}(0)} \le Ch^{k+\ell+1},
\end{align*}
where $C$ depends on $\norm{u}_{k+\ell+4,\infty}$ and $\norm{u_t}_{k+\ell+1,\infty}$, but is independent of $h$.
\end{lemma}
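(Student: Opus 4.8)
The plan is to use the cascade structure of the first-order system. The numerical initial condition \eqref{ini con} sets $r_h(x,0)=\il{r}(x,0)$, so $\be{r}(0)=0$ at the outset, and it remains to bound $\be{q}(0),\be{p}(0),\be{u}(0)$ and $\bet{u}(0)$, in that order. At $t=0$ the error equations \eqref{orthde2}--\eqref{orthde4} hold; moreover, decomposing the total error $e_v=\epsilon_v+\be{v}$ in the cell-wise equations \eqref{orthj} and using the identities $(\epsilon_r,\phi)_j+\hjop{\lambda}{\epsilon_q}{\phi}=(\wll{r},\phi)_j$, $(\epsilon_q,\psi)_j+\hjop{\tlam}{\epsilon_p}{\psi}=(\wll{q},\psi)_j$, $(\epsilon_p,\zeta)_j+\hjop{\theta}{\epsilon_u}{\zeta}=(\wll{p},\zeta)_j$ produced by the telescoping in the proof of Lemma \ref{pro of interpolation functions}, these error equations hold cell by cell. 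In particular $(\be{r}(0)+\wll{r}(0),\phi)_j+\hjop{\lambda}{\be{q}(0)}{\phi}=0$ has precisely the algebraic form of \eqref{im1} with $(\be{q}(0),\,\be{r}(0)+\wll{r}(0))$ in the role of $(q_h,r_h)$, and likewise for the $p$- and $u$-relations.

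The key device is to apply Lemma \ref{derivative jump} verbatim to these error relations: since they match \eqref{l1}--\eqref{l3} after the substitution above, I obtain
\begin{align*}
\norm{(\be{q}(0))_x}+h^{-\ff}\jumpmo{\be{q}(0)} &\lesssim \norm{\be{r}(0)}+\norm{\wll{r}(0)}, \\
\norm{(\be{p}(0))_x}+h^{-\ff}\jumpmo{\be{p}(0)} &\lesssim \norm{\be{q}(0)}+\norm{\wll{q}(0)}, \\
\norm{(\be{u}(0))_x}+h^{-\ff}\jumpmo{\be{u}(0)} &\lesssim \norm{\be{p}(0)}+\norm{\wll{p}(0)}.
\end{align*}
By \eqref{pro of correction function 1} every $\norm{\wll{v}(0)}\lesssim h^{k+\ell+1}$, and since $\be{r}(0)=0$ the first line already yields $\norm{(\be{q}(0))_x}\lesssim h^{k+\ell+1}$ together with the sharper jump bound $\jumpmo{\be{q}(0)}\lesssim h^{\ff}\,h^{k+\ell+1}$ --- a half order better than the $L^2$ target.

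Each $L^2$ estimate is then closed by the discrete Poincar\'e inequality \eqref{dp}. For $\be{q}(0)$ the three ingredients are $\mos{\be{q}(0)}_{H^1(\ih)}=\norms{(\be{q}(0))_x}\lesssim h^{2(k+\ell+1)}$; the jump term $h^{-1}\jumpmos{\be{q}(0)}\lesssim h^{-1}\cdot h\cdot h^{2(k+\ell+1)}=h^{2(k+\ell+1)}$, where the half-order-stronger jump bound exactly absorbs the $h^{-1}$ weight; and the mean, obtained by testing \eqref{orthde3} with $\psi=1$ (the DG operator telescopes to zero under periodicity) so that $\int_I\be{q}(0)=-\int_I\wll{q}(0)$ is $\lesssim h^{k+\ell+1}$ and enters \eqref{dp} quadratically. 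Hence $\norm{\be{q}(0)}\lesssim h^{k+\ell+1}$. Inserting this into the second line above and repeating the Poincar\'e argument (the mean of $\be{p}(0)$ coming from \eqref{orthde4} with $\zeta=1$) gives $\norm{\be{p}(0)}\lesssim h^{k+\ell+1}$, and then the third line gives $\norm{\be{u}(0)}\lesssim h^{k+\ell+1}$, where the mean of $\be{u}(0)$ is fixed by mass conservation $\int_I u_h(0)=\int_I u_0$. The same invertibility used here --- each operator $\hop{\sigma}{\cdot}{\cdot}$ with $\sigma\neq\ff$ has only the constants in its kernel, and the successive means are pinned by the solvability of the next equation --- secures existence and uniqueness of \eqref{ini con}--\eqref{im}.

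Finally, for $\bet{u}(0)$ I would test \eqref{orthde1} at $t=0$ with $v=\bet{u}(0)$. Writing each DG operator in the integration-by-parts form $\hop{\sigma}{w}{v}=-(w_x,v)-\sum_{j=1}^N\jump{w}_{\jm}v^{(\tilsig)}_{\jm}$, discarding the $r$-term because $\be{r}(0)=0$, and bounding the boundary sums with the inverse trace inequality in \eqref{inv}, I get
\[
\norm{\bet{u}(0)}\lesssim \norm{(\be{u}(0))_x}+h^{-\ff}\jumpmo{\be{u}(0)}+\norm{(\be{p}(0))_x}+h^{-\ff}\jumpmo{\be{p}(0)}+\norm{\partial_t\wll{u}(0)},
\]
and every term on the right is $\lesssim h^{k+\ell+1}$ by the bounds just established and by \eqref{pro of correction function 1} (with $n=1$). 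Summing the five estimates proves the lemma. I expect the main obstacle to be the jump seminorm: the discrete Poincar\'e inequality weights $\jumpmos{\be{q}(0)}$ by $h^{-1}$, so a bound obtained merely by testing the error relation against $\be{q}(0)$ itself --- which via \eqref{DG operator} yields only $\jumpmos{\be{q}(0)}\lesssim\norm{\wll{r}(0)}\norm{\be{q}(0)}$ and hence loses half an order --- is too weak. Recognizing that Lemma \ref{derivative jump} transfers intact to the error relations, thereby furnishing the $h^{\ff}$-scaled jump control, is exactly what lets the optimal rate $h^{k+\ell+1}$ survive.
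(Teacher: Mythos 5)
Your proof is correct and follows essentially the same route as the paper's: $\be{r}(0)=0$ by construction, Lemma \ref{derivative jump} applied to the error relations supplies the $h^{1/2}$-scaled jump control, and the discrete Poincar\'e inequality with the mean pinned by conservation closes each $L^2$ bound in cascade, exactly as in the paper. The only deviations are minor: the paper applies \eqref{dp} to $\be{q}(0)$ minus its average and adds the average back separately (the mean enters \eqref{dp} linearly, not quadratically as you assumed, so this subtraction is needed with the inequality as stated), and it bounds $\norm{\bet{u}(0)}$ by substituting \eqref{orthde3}--\eqref{orthde4} into \eqref{orthde1} and using Young's inequality rather than estimating the DG operators directly via integration by parts and inverse inequalities.
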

\begin{proof}
Let us start by showing unique existence, and taking $q_h(x,0)$ as an example.
To do that, we need the following conservation property of $q_h(x,0)$,
	\begin{align}\label{in1}
		\intid{(q_0 - q_h)} = 0,
	\end{align}
which is obtained by taking $\psi = 1$ in \eqref{im2}, summing over all $j$ and using the definition of DG operator in \eqref{dgo}, i.e.,
	\begin{align*}
		\intid{q_h} + \sumjn \big(-p_h^{\tla}{\ljp} + p_h^{\tla}{\ljm}\big) = 0,
	\end{align*}
in combination with periodic boundary conditions and Galerkin orthogonality.
For $t=0$, suppose $q_h^1$ and $ q_h^2$ are the solutions of \eqref{im1} with $r_h$ satisfying \eqref{ini con}.
Denoting $ w_h = q_h^1 - q_h^2 \in V_h$, it follows from \eqref{im1} and \eqref{in1} that
\begin{align*}
	\hjop{\lambda}{w_h}{\phi} = 0,\quad \intid{w_h} =0,
\end{align*}
which, by letting $\phi = w_h$, summing over all $j$ and using the identity \eqref{DG operator}, implies
$$
\jumpmos{w_h} = 0.
$$
This indicates that $w_h$ is constant in $I$. Since $\intid{w_h} =0$, we conclude that
$$
	w_h \equiv 0.
$$
Therefore, $q_h(x,0)$ is unique, and thus for $p_h(x,0)$. Since \eqref{im} is a linear system, the existence follows immediately.
When $t=0$, the scheme \eqref{LDG1} is still valid due to the continuity of numerical solutions with respect to time. This allows us to derive the conservation property of $u_h(x,0)$, and thus unique existence follows.

We now move on to the estimate of $ \norm{\bez{q}} $, and the estimates to $ \norm{\bez{p}} $, $ \norm{\bez{u}} $ are analogous. By using \eqref{orthde2} and  $ \be{r}(0) = 0 $ in \eqref{ini con}, we have
\begin{align*}
	\hop{\lambda}{\bez{q}}{\phi} = -(\wllz{r},\phi),
\end{align*}
which, by Lemma \ref{derivative jump}, yields
\begin{align}
	\norm{\bexz{q}} + h^{-\ff}\jumpmo{\bez{q}} \lesssim \norm{\wllz{r}}.\label{r co2}
\end{align}
Using the orthogonality of $\pl q$ in \eqref{ggr_pro_1} and  $\wi{q}$ in \eqref{pro of correction function 2}, we have
\begin{align*}
	\intijd{\be{q}} = \intijd{\leri{ \pl q - \wl{q} - q_h} } = \intijd{ \leri{q - \wll{q} - q_h} }.
\end{align*}
Summing the above equation over all $j$ and taking into account \eqref{in1}, we arrive at
\begin{align}
	\intid{\bez{q}} = \intid{\leri{ q_0 - \wllz{q} - q_h(0)}} = -\intid{\wllz{q}}.\label{r co3}
\end{align}
We are now ready to estimate $ \norm{\bez{q}} $. It reads
\begin{align*}
	\norm{\bez{q}} &\le \norm{\bez{q} - \frac{1}{|I|}\intid{\bez{q}} } + |\frac{1}{|I|}\intid{\bez{q}}|\\
	&\lesssim \norm{\bexz{q}} + h^{-\ff}\jumpmo{\bez{q}}
	+ |\intid{\wllz{q}}|\\
	&\lesssim \norm{\wllz{r}} +  \norm{\wll{q}(0)}_{\infty},
\end{align*}
where in the second step we have used \eqref{r co3} and the discrete Poincar\'{e}  inequality with $\zeta = \bez{q} - \frac{1}{|I|}\intid{\bez{q}} $ in \eqref{dp},
and in the last step we have employed \eqref{r co2}. Consequently, by  \eqref{pro of correction function 1} in Lemma \ref{pro cf}, we get
\begin{align*}
	\norm{\bez{q}} \le  Ch^{k+\ell+1},
\end{align*}
where $C$ depends on $\norm{u}_{k+\ell+4,\infty}$, but is independent of $h$.

To finish the proof of Lemma \ref{lemma sc}, it remains to show a bound for $\norm{\bet{u}(0)}$. Due to the continuity  with respect to time, \eqref{orthde1} is still valid for $t=0$.  Since $\bar{e}_r(0) = 0$,  we rewrite \eqref{orthde1} to get
\begin{align*}
	(\bet{u},v)  &= -(\wllt{u},v) + \hop{\theta}{\be{u}}{v}
	+ \hop{\tlam}{\be{p}}{v}\\
	&= -(\wllt{u},v)  - (\be{p}, v) - (\wll{p},v)  - (\be{q},v) - (\wll{q},v),
\end{align*}
where we have also used \eqref{orthde3}--\eqref{orthde4}.
Letting $ v=\betz{u} $, by using Young's inequality and \eqref{pro of correction function 1} in Lemma \ref{pro cf}, we have, at $ t=0 $,
\begin{align*}
	\norms{\bet{u}}  &\le \frac{1}{16} \norms{\bet{u}} +  \frac{1}{16} \norms{\bet{u}} + \norms{\be{p}} +  \frac{1}{4}\norms{\bet{u}} +  \frac{1}{8}\norms{\bet{u}} \\
	&\quad \!~+ \norms{\be{q}} +  \frac{1}{4}\norms{\bet{u}} + \chtkl\\
	&\le \frac{3}{4} \norms{\bet{u}} + \norms{\be{p}} + \norms{\be{q}}  + \chtkl,
\end{align*}
which is,
\begin{align*}
	\norms{\bet{u}} \le  4\norms{\be{p}} + 4\norms{\be{q}}  + \chtkl.
\end{align*}
Then, by using the estimates of $ \norm{\bez{q}} $ and $ \norm{\bez{p}} $, we have
\begin{align*}
	\norm{\bet{u}(0)} \le Ch^{k+\ell+1},
\end{align*}
where $C$ depends on $\norm{u}_{k+\ell+4,\infty}$ and $\norm{u_t}_{k+\ell+1,\infty}$, but is independent of $h$.
This completes the proof of Lemma \ref{lemma sc}.
\end{proof}

\subsection{Supercloseness}\label{sc}
The supercloseness between interpolation functions and LDG solutions is given in the following theorem.
\begin{theorem}\label{scth}
Suppose $u$ is the exact solution of the fourth-order problem \eqref{eq_lin_4_} with periodic boundary conditions, which is sufficiently smooth, e.g., $u \in W^{k+\ell+4,\infty}(\ih)$, $u_t \in W^{k+\ell+3,\infty}(\ih)$. Assume that $v_h$ ($v = u,p,q,r$) are LDG solutions to \eqref{num_lin_4_LDG} with generalized fluxes \eqref{gflux_lin_4_LDG} and $ \theta=\lambda>\ff$. Let  $\be{v}$ and interpolation functions $\il{v}$  be defined in \eqref{error_lin_4}--\eqref{vil} $(\ell\in \z{k})$. Then, under the numerical initial condition \eqref{ini con}--\eqref{im}, we have the following supercloseness result
\begin{align}\label{supercloseness}
	\norm{\be{u}(t)} + \norm{\be{p}(t)} + \leri{\inttdtau{ (\norms{\be{q}} + \norms{\be{r}}) }}^{\ff} \le \chkl,
\end{align}
where $C$ depends on $\norm{u}_{k+\ell+4,\infty}$ and $\norm{u_t}_{k+\ell+3,\infty}$, but is independent of $h$.
\end{theorem}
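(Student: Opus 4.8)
The plan is to prove \eqref{supercloseness} by two coupled energy estimates: one that controls $\norm{\be u}$ together with $\inttdtau{\norms{\be q}}$, and a second, time-differentiated one that controls $\norm{\be p}$ together with $\inttdtau{\norms{\be r}}$. Throughout I would exploit the hypothesis $\theta=\lambda>\ff$, which makes the weights conjugate ($\tilth=\tlam=\tilde{\lambda}$) so that the generalized skew-symmetry \eqref{DG operator} degenerates to the exact integration-by-parts rule $\hop{\theta}{w}{v}=-\hop{\tilth}{v}{w}$ (with no surviving jump) together with the dissipative self-rule $\hop{\theta}{w}{w}=\frac{\tilth-\theta}{2}\jumpmos{w}$, where $\tilth-\theta=1-2\theta<0$.

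For the first estimate I would test \eqref{orthde1} with $\be u$ and rewrite every cross term with the two rules above, feeding in \eqref{orthde4} (tested with $\be p$ and with $\be r$), \eqref{orthde3} (tested with $\be q$) and \eqref{orthde2} (tested with $\be p$); the conjugate weights cancel all interior boundary contributions and the cascade closes on the dissipation $\norms{\be q}$, producing schematically
\begin{equation*}
\frac12\fd\norms{\be u}+\frac{\theta-\tilth}{2}\jumpmos{\be u}+\norms{\be q}=\norms{\be p}+R_1 .
\end{equation*}
The crucial point is that every pairing of a correction function against $\be u,\be p,\be q,\be r$ collected in $R_1$ is exactly the summed left-hand side of one of \eqref{key1}--\eqref{key4}, so Lemma \ref{pro of interpolation functions} bounds it by $\hkl\norm{\phi}_{1}$ (never by the far larger $\norm{\wll v}\norm{\phi}$), and the $H^1$ factor is reduced to an $L^2$ norm of the adjacent variable, with no loss of a power of $h$, by Lemma \ref{derivative jump}. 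The growth term $\norms{\be p}$ is then removed by the discrete interpolation identity $\norms{\be p}=-(\be q,\be u)-(\wll p,\be p)-(\wll q,\be u)$ read off from \eqref{orthde3}--\eqref{orthde4}, giving $\norms{\be p}\le\frac12\norms{\be q}+C\norms{\be u}+(\text{small})$ so that $\frac12\norms{\be q}$ is absorbed into the dissipation.

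The second estimate is where the real work lies. To manufacture $\fd\norms{\be p}$ I would differentiate the auxiliary relation \eqref{orthde4} in time and test with $\be p$; the dangerous term $\hop{\theta}{\bet u}{\be p}=-\hop{\tilth}{\be p}{\bet u}$ I would not attack directly but convert, via \eqref{orthde3} tested with $\bet u$, into $(\be q,\bet u)$, after which $\bet u$ is eliminated by testing \eqref{orthde1} with the dissipation variable $\be q$, so that $\bet u$ never survives inside a quadratic term. Reducing the remaining DG terms through \eqref{orthde2}--\eqref{orthde3} yields
\begin{equation*}
\frac12\fd\norms{\be p}+\frac{\theta-\tilth}{2}\jumpmos{\be p}+\norms{\be r}=\norms{\be q}+R_2 ,
\end{equation*}
and the new growth term $\norms{\be q}$ is disposed of by the companion identity $\norms{\be q}=-(\be r,\be p)-(\wll q,\be q)-(\wll r,\be p)\le\frac12\norms{\be r}+C\norms{\be p}+(\text{small})$. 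Now $R_2$ carries genuinely temporal correction data: $(\wllt p,\be p)$ appears outright, while $(\wll q,\bet u)$ must be integrated by parts in time into a boundary term plus $\inttdtau{(\wllt q,\be u)}$. These are precisely the terms that demand the sharper hypothesis $u_t\in W^{k+\ell+3,\infty}$ and invoke the $n=1$ bounds of Lemma \ref{pro cf}, being controlled by the time-differentiated analogues of the weak bounds in Lemma \ref{pro of interpolation functions}.

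Finally I would add the two identities, apply Young's inequality to each correction pairing so as to absorb the $H^1$/derivative factors into the jump seminorms and the $\norms{\be q}+\norms{\be r}$ dissipation, and run Gronwall's inequality on $\norms{\be u}+\norms{\be p}$; Lemma \ref{lemma sc} supplies the initial data $\norm{\be u(0)}+\norm{\be p(0)}+\norm{\bet u(0)}\lesssim\hkl$, the bound on $\norm{\bet u(0)}$ being exactly what launches the differentiated estimate. I expect the genuine obstacle to lie in the second estimate and, more broadly, in forcing every correction remainder to the sharp order $\hkl$ \emph{uniformly in} $\ell$: the terms coupling to the top variable $\be r$ and the time-differentiated remainders cannot be bounded by crude $L^2$ norms of the correction functions (which are only $O(h^{k+2})$), and must instead be routed through the superconvergent weak estimates of Lemma \ref{pro of interpolation functions} in tandem with the derivative-control of Lemma \ref{derivative jump}, so that no inverse power of $h$ is incurred.
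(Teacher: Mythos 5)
Your proposal follows essentially the same route as the paper's proof: the same two coupled energy estimates (the direct one with test functions $(\be{u},\be{p},\be{q},-\be{r})$ and the time-differentiated one with $(-\be{q},\be{r},\bet{u},\be{p})$), the same conjugate-weight skew-symmetry cancellations and substitutions from the auxiliary equations, the same integration by parts in time for the term $(\wll{q},\bet{u})$, and Gronwall combined with the initial bounds of Lemma \ref{lemma sc}. The only substantive deviation is bookkeeping: the summed error equations \eqref{orthde} already carry the reduced residuals $(\wll{v},\cdot)$, which are $O(\hkl)$ directly in $L^2$, so the detour through the $H^1$-duality bounds of Lemma \ref{pro of interpolation functions} and Lemma \ref{derivative jump} (which, as stated, applies to the homogeneous scheme rather than to $\be{v}$, and would need the source-term variant) is unnecessary, and it is $\norm{\be{p}(0)}$ and $\norm{\be{u}(0)}$, not $\norm{\bet{u}(0)}$, that launch the differentiated estimate.
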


\begin{proof}
First, taking $(v,\phi,\psi,\zeta) = (\be{u},\be{p},\be{q},-\be{r})$ in \eqref{orthde1}--\eqref{orthde4} and adding them together, by using the generalized skew-symmetry property in \eqref{DG operator}, namely,
\begin{align}
	\hop{\theta}{\be{u}}{\be{r}} + \hop{\tilth}{\be{r}}{\be{u}} = 0,~~
	\hop{\lambda}{\be{q}}{\be{p}} + \hop{\tlam}{\be{p}}{\be{q}} = 0,\label{ssqp}
\end{align}
 we get
\begin{align}\label{uq1}
	\ff\fd \norms{\be{u}} \!+\! \norms{\be{q}}  =-(\wllt{u},\be{u}) -  (\wll{r},\be{p}) - (\wll{q},\be{q}) + (\wll{p},\be{r}) \!+\! \hop{\theta}{\be{u}}{\be{u}} \!+\! \hop{\tlam}{\be{p}}{\be{u}}.
\end{align}
Utilizing identity \eqref{DG operator} with the same weight $\theta$, we have
\begin{align*}
	\hop{\theta}{\be{u}}{\be{u}} = \big(\ff - \theta\big) \sumjn \jumpsjp{\be{u}} \le 0,
\end{align*}
since $\theta>\ff$.
Taking $\psi = \be{u}$ in \eqref{orthde3}, we obtain
\begin{align*}
	 \hop{\tlam}{\be{p}}{\be{u}} = - (\be{q},\be{u}) - (\wll{q},\be{u}).
\end{align*}
Consequently, \eqref{uq1} becomes
\begin{align*}
	\ff\fd \norms{\be{u}} + \norms{\be{q}} &\le -(\wllt{u},\be{u}) -  (\wll{r},\be{p}) - (\wll{q},\be{q}) + (\wll{p},\be{r}) - (\be{q}, \be{u}) - (\wll{q},\be{u}).
\end{align*}
By using Young's inequality and \eqref{pro of correction function 1} in  Lemma \ref{pro cf}, we have
\begin{align*}
	\ff\fd \norms{\be{u}} \!+\! \norms{\be{q}}
	& \!\le \! \ff\norms{\be{u}} + \norms{\be{p}} + \frac{1}{8}\norms{\be{q}} +\frac{1}{4} \norms{\be{r}}  \!+ \! \frac{1}{8}\norms{\be{q}} + 2\norms{\be{u}} + \ff\norms{\be{u}} + \chtkl \!,
\end{align*}
which is,
\begin{align}\label{teu}
	\ff\fd \norms{\be{u}} + \norms{\be{q}}
	&\le 3\norms{\be{u}} + \norms{\be{p}} + \frac{1}{4}\norms{\be{q}} +\frac{1}{4} \norms{\be{r}} + \chtkl.
\end{align}

Next, we take the time derivative of \eqref{orthde4} and choose $(v,\phi,\psi,\zeta) = (-\be{q},\be{r},\bet{u},\be{p})$ in \eqref{orthde1}--\eqref{orthde3} and the newly obtained \eqref{orthde4}.
Summing them together and using the generalized skew-symmetry property in \eqref{DG operator} with $\theta = \lambda$, namely,
\begin{align*}
	\hop{\theta}{\bet{u}}{\be{p}} + \hop{\tlam}{\be{p}}{\bet{u}} = 0,~~
	\hop{\lambda}{\be{q}}{\be{r}} + \hop{\tilth}{\be{r}}{\be{q}} = 0,
\end{align*}
we obtain
\begin{align}\label{pr1}
	\ff\fd \norms{\be{p}} \!+\! \norms{\be{r}} \!=\!(\wllt{u},\be{q}) \!-\!  (\wll{r},\be{r}) \!-\! (\wll{q},\bet{u}) \!-\! (\wllt{p},\be{p}) \!-\! \hop{\theta}{\be{u}}{\be{q}} \!-\! \hop{\tlam}{\be{p}}{\be{q}}.
\end{align}
Taking $\zeta = \be{q}$ in \eqref{orthde4}, we get
\begin{align*}
	-\hop{\theta}{\be{u}}{\be{q}} = (\be{p},\be{q}) + (\wll{p},\be{q}).
\end{align*}
Taking $\phi = \be{p}$ in \eqref{orthde2}, and using the  generalized skew-symmetry property in \eqref{ssqp}, we derive
\begin{align*}
 - \hop{\tlam}{\be{p}}{\be{q}} = \hop{\lambda}{\be{q}}{\be{p}} =  - (\be{r},\be{p}) - (\wll{r},\be{p}).
\end{align*}
Consequently, \eqref{pr1} becomes
\begin{align*}
	\ff\fd \norms{\be{p}} + \norms{\be{r}} &\le (\wllt{u},\be{q}) -  (\wll{r},\be{r}) - (\wll{q},\bet{u}) - (\wllt{p},\be{p}) \\
	&\quad \!~ + (\wll{p},\be{q}) + (\be{p}, \be{q})
	- (\wll{r},\be{p}) - (\be{r}, \be{p}).
\end{align*}
Utilizing Young's inequality and \eqref{pro of correction function 1} in  Lemma \ref{pro cf}, we obtain
\begin{align}
	\ff\fd \norms{\be{p}} + \norms{\be{r}}
	&\le  \frac{1}{8}\norms{\be{q}} + \frac{1}{4}\norms{\be{r}} + \Psi + \frac{1}{2}\norms{\be{p}} + \chtkl \nonumber \\
	&\ind + \frac{1}{8}\norms{\be{q}} + \frac{1}{4}\norms{\be{q}} + \norms{\be{p}} +  \frac{1}{2}\norms{\be{p}} +  \frac{1}{4}\norms{\be{r}} + \norms{\be{p}}, \nonumber\\
	&\le \Psi + 3\norms{\be{p}} +
	 \frac{1}{2}\norms{\be{q}} + \frac{1}{2}\norms{\be{r}}  + \chtkl,\label{teut}
\end{align}
where
$$
\Psi = - (\wll{q},\bet{u}),
$$
and $C$ depends on $\norm{u}_{k+\ell+4,\infty}$ and $\norm{u_t}_{k+\ell+1,\infty}$, but is independent of $h$.

Now, summing \eqref{teu} and \eqref{teut} together,  we have
\begin{align}\label{upqr1}
 \ff\fd \leri{\norms{\be{u}} + \norms{\be{p}}} + \frac{1}{4}\leri{\norms{\be{q}} + \norms{\be{r}}}
\le \Psi + 3\norms{\be{u}} + 4\norms{\be{p}} + \chtkl,
\end{align}
where $\Psi$ satisfies, by integration by parts with respect to time,
\begin{align*}
\inttdtau{\Psi}	= -\inttdtau{ \intid{\wll{q}\bet{u}} }
= \inttdtau{ \intid{\wllt{q}\be{u}} } - \intid{\wll{q}(t)\be{u}(t)} + \intid{\wll{q}(0)\be{u}(0)}.
\end{align*}
It implies that, by using Young's inequality, \eqref{pro of correction function 1} in Lemma \ref{pro cf} and the estimate of $\norm{\be{u}(0)}$ in Lemma \ref{lemma sc},
\begin{align}\label{psi}
	\inttdtau{\Psi}	
	\le \inttdtau{ \norms{\be{u}} } + \frac{1}{4}\norms{\be{u}(t)}  + \chtkl,
\end{align}
where $C$ depends on $\norm{u}_{k+\ell+3,\infty}$ and $\norm{u_t}_{k+\ell+3,\infty}$, but is independent of $h$.
Integrating \eqref{upqr1} with respect to time from $ 0 $ to $ t $, we get, by \eqref{psi}
\begin{align*}
	&\frac{1}{4}\norms{\be{u}(t)} + \ff\norms{\be{p}(t)} + \frac{1}{4}\inttdtau{\leri{\norms{\be{q}(t)} + \norms{\be{r}(t)}}} \\
	&  \le  4\inttdtau{ \leri{\norms{\be{u}} + \norms{\be{p}}}}
	 +  \ff \leri{\norms{\be{u}(0)} + \norms{\be{p}(0)}} + \chtkl.
\end{align*}
This, together with estimates of $\norm{\be{u}(0)}$ and $\norm{\be{p}(0)}$ in Lemma \ref{lemma sc}, produces
\begin{align*}
	\frac{1}{4}\norms{\be{u}(t)} + \ff\norms{\be{p}(t)}  + \frac{1}{4}\inttdtau{\leri{\norms{\be{q}(t)} + \norms{\be{r}(t)}}}
	\le  4\inttdtau{ \leri{\norms{\be{u}} + \norms{\be{p}}}} + \chtkl,
\end{align*}
where  $C$ depends on $\norm{u}_{k+\ell+4,\infty}$ and $\norm{u_t}_{k+\ell+3,\infty}$, but is independent of $h$.
A straightforward application of Gronwall's inequality gives us the desired result
\eqref{supercloseness}. This completes the proof of Theorem \ref{scth}.
\end{proof}

\subsection{Superconvergence}\label{superconvergence}
To show superconvergent results at generalized Radau points, we begin by recalling the generalized Radau polynomials \cite{cao2017upwind-biased}
\begin{equation*}
	R^{\sigma}_{k+1} =
	\left\{
	\begin{array}{lr}
		L_{k+1} - (2\sigma - 1)L_k,\quad \text{when $k$ is even},  \\
		(2\sigma-1)L_{k+1}-L_k,\quad \text{when $k$ is odd}
	\end{array}
	\right.
\end{equation*}
defined in $ [-1,1] $. Then, we rescale $R^{\sigma}_{k+1} $to $ \ce $ to get $ R^{\sigma}_{j,k+1}$ $(j\in\z{N})$, and denote the roots of $R^{\sigma}_{j,k+1}$ and $ \partial_x R^{\sigma}_{j,k+1}$  by $\mathcal{R}_{j,m}^{\sigma}$ and $ \mathcal{R}_{j,m}^{\sigma,\star}$, respectively,  where $m \in \z{k}$ and $\sigma\neq\ff$. Accordingly,
for any positive weight $\theta \ne \ff$, a local projection $P_h u \in V_h$ can be defined as that in \cite{cao2017upwind-biased}, namely
\begin{align*}
&~& &~& &~&	\int_{\ce} (P_h u -u) v &= 0,&   &\forall v\in P^{k-1}(\ce),  &~& &~& &~&\\
&~& &~& &~&	\theta P_h u(\xjp^-) + \tilth P_h u(\xjm^+) &= \theta u(\xjp^-) +\tilth u(\xjm^+),&  &j\in\z{N},  &~& &~& &~&
\end{align*}
which satisfies the following lemma.
\begin{lemma}\textsuperscript{\cite{cao2017upwind-biased}}\label{phu pro}
Suppose $u\in W^{k+2,\infty}(\ih)$. For $P_h u$ defined above with $\theta \ne \ff$, we have
\begin{align*}
		\mo{(u - P_h u)(\mathcal{R}_{j,m}^{\theta})} &\lesssim h^{k+2}\norm{u}_{k+2,\infty},\\
		\mo{\partial_x(u - P_h u)(\mathcal{R}_{j,m}^{\theta,\star})} &\lesssim h^{k+1}\norm{u}_{k+2,\infty},\\
		\norm{P_h u - \pt u}_{\infty} &\lesssim h^{k+2}\norm{u}_{k+2,\infty}.
\end{align*}
\end{lemma}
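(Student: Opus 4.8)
The plan is to reduce everything to a single cell $\ce$ through the Legendre expansion and to identify the leading term of the projection error with the generalized Radau polynomial $R^\theta_{j,k+1}$. First I would expand $u=\sum_{m=0}^{\infty}u_{j,m}\lenj{m}$ on $\ce$. Since the orthogonality $\int_{\ce}(P_h u-u)v=0$ for $v\in P^{k-1}(\ce)$ forces $P_h u$ to share the first $k$ Legendre coefficients of $u$, only the top coefficient $a^{\mathrm{loc}}_{j,k}$ is free, and it is fixed by the boundary relation $\theta P_h u(\xjpp)+\tilth P_h u(\xjmm)=\theta u(\xjpp)+\tilth u(\xjmm)$. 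I emphasize that both one-sided limits here are taken from within $\ce$, so $P_h u$ is genuinely cell-local (unlike $\pt u$). Using $\lenj{m}(\xjpp)=1$ and $\lenj{m}(\xjmm)=(-1)^m$, solving for $a^{\mathrm{loc}}_{j,k}$ yields the explicit error formula
\begin{equation*}
\left.(u-P_h u)\right|_{\ce}=\sum_{m=k+1}^{\infty}u_{j,m}\left(\lenj{m}-c_m\lenj{k}\right),\qquad c_m=\frac{\theta+\tilth(-1)^m}{\theta+\tilth(-1)^k}.
\end{equation*}

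The key step is to recognize that the dominant $m=k+1$ mode is proportional to the Radau polynomial. A direct check of $c_{k+1}$ in the two parities of $k$ gives $\lenj{k+1}-c_{k+1}\lenj{k}=\gamma_k R^\theta_{j,k+1}$, with $\gamma_k=1$ for even $k$ and $\gamma_k=(2\theta-1)^{-1}$ for odd $k$, exactly matching the definition of $R^\sigma_{k+1}$. Hence $u-P_h u=\gamma_k u_{j,k+1}R^\theta_{j,k+1}+\mathcal E_j$, where the tail $\mathcal E_j=\sum_{m\ge k+2}u_{j,m}(\lenj{m}-c_m\lenj{k})$ satisfies $\norm{\mathcal E_j}_{\infty,\ce}\lesssim h^{k+2}\norm{u}_{k+2,\infty}$ (from the decay of Legendre coefficients and boundedness of $c_m$ for $\theta\ne\ff$) and $\norm{\partial_x\mathcal E_j}_{\infty,\ce}\lesssim h^{k+1}\norm{u}_{k+2,\infty}$ (an extra factor $h^{-1}$ from the scaling $\partial_x\lenj{m}=\bhj^{-1}L'_m$). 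Since $|u_{j,k+1}|\lesssim h^{k+1}\norm{u}_{k+1,\infty}$, evaluating at the roots $\mathcal R^\theta_{j,m}$ of $R^\theta_{j,k+1}$ annihilates the leading term and gives the first estimate, while differentiating and evaluating at the roots $\mathcal R^{\theta,\star}_{j,m}$ of $\partial_x R^\theta_{j,k+1}$ annihilates $\partial_x R^\theta_{j,k+1}$ and gives the second.

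For the supercloseness estimate I would compare $P_h u$ with the GGR projection $\pt u$. By \eqref{v-ptv} the two projections carry the same first $k$ Legendre coefficients, so on each cell their difference is a single multiple of $\lenj{k}$, namely $P_h u-\pt u=(a^{\mathrm{loc}}_{j,k}-u_{j,k}-\bar u_{j,k})\lenj{k}$, where $\bar u_{j,k}$ is the GGR coefficient from \eqref{v-ptv}, \eqref{vjk esti}. Because $\normice{\lenj{k}}=1$, it suffices to bound this scalar. The local part gives $a^{\mathrm{loc}}_{j,k}-u_{j,k}=c_{k+1}u_{j,k+1}+O(h^{k+2})$, while $\bar u_{j,k}$ is governed by the interface condition of $\pt u$, which produces the global coupling $\theta\bar u_{j,k}+\tilth(-1)^k\bar u_{j+1,k}=\theta S_j+\tilth T_{j+1}$ with $S_j=\sum_{m\ge k+1}u_{j,m}$ and $T_{j+1}=\sum_{m\ge k+1}u_{j+1,m}(-1)^m$. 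This is a circulant system of exactly the type already solved in Lemma \ref{pro cf}; solving it and using that neighboring coefficients $u_{j,k+1}$ and $u_{j+1,k+1}$ differ only by $O(h^{k+2})$ shows that, to leading order, $\bar u_{j,k}=c_{k+1}u_{j,k+1}$ as well, so the two $O(h^{k+1})$ contributions cancel and $\norm{P_h u-\pt u}_{\infty}\lesssim h^{k+2}\norm{u}_{k+2,\infty}$.

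The main obstacle will be this last cancellation: unlike $P_h u$, the GGR projection is globally coupled, so $\bar u_{j,k}$ is only accessible through the circulant system, and one must show that its leading behavior reproduces the purely local constant $c_{k+1}u_{j,k+1}$. The circulant-matrix machinery of Lemma \ref{pro cf} (invertibility for $\theta\ne\ff$ together with the explicit circulant inverse) handles the global solve, while the smoothness of $u$, which makes adjacent Legendre coefficients agree up to $O(h^{k+2})$, is precisely what forces the two leading terms to coincide.
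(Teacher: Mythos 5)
The paper does not actually prove this lemma; it is quoted verbatim from \cite{cao2017upwind-biased}, so there is no internal proof to compare against. Your reconstruction follows exactly the standard route of that reference: Legendre expansion on $\ce$, the explicit error formula with $c_m=(\theta+\tilth(-1)^m)/(\theta+\tilth(-1)^k)$, identification of the leading term of $u-P_hu$ with $\gamma_k u_{j,k+1}R^{\theta}_{j,k+1}$ (your values of $c_{k+1}$ and $\gamma_k$ check out in both parities of $k$), annihilation at the Radau and derivative-Radau points, and a circulant comparison between the local top coefficient and the GGR coefficient $\bar u_{j,k}$. The first two estimates are sound, modulo one routine point you gloss over: the absolute convergence of $\sum_{m\ge k+2}|u_{j,m}|$ under only $W^{k+2,\infty}$ regularity is usually secured by noting that $c_m$ depends only on the parity of $m$, so $\sum_{m\ge k+2}c_mu_{j,m}$ reduces to a fixed linear combination of the two endpoint values of a truncation error, each of size $O(h^{k+2})$.

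The one step that needs a genuine caveat is the final cancellation. Your argument rests on the claim that neighboring coefficients $u_{j,k+1}$ and $u_{j+1,k+1}$ differ only by $O(h^{k+2})$. Expanding $u$ about the shared node $\xjp$ gives $u_{j,k+1}=C_k\bhj^{\,k+1}u^{(k+1)}(\xjp)+O(h^{k+2})$ and the analogous formula on $I_{j+1}$, so the difference is $C_k\,u^{(k+1)}(\xjp)\big(\bhj^{\,k+1}-\bar h_{j+1}^{\,k+1}\big)+O(h^{k+2})$, which is only $O(h^{k+1})$ on a general quasi-uniform mesh. Consequently the identity $\bar u_{j,k}=c_{k+1}u_{j,k+1}+O(h^{k+2})$, and with it the third estimate $\norm{P_hu-\pt u}_{\infty}\lesssim h^{k+2}$, requires adjacent cell lengths to agree to $O(h^2)$ (uniform or smoothly varying meshes); this restriction is implicit in the cited reference and should be made explicit in your write-up. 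On such meshes your circulant machinery does close the argument, provided you also handle the case $|\tilth/\theta|>1$ by running the geometric expansion of the inverse in the opposite direction, exactly as in Lemma \ref{pro cf}.
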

In what follows, superconvergence results of the numerical flux, cell averages, generalized Radau points as well as supercloseness are presented, in which variables $u$ and $p$ are mainly considered, and the case for variables $q$ and $r$ can be established in a similar manner, essentially following Theorem \ref{scth} and \cite{liu2021sc}.
\begin{theorem}\label{scorderth}
Suppose $u$ is the exact solution of \eqref{eq_lin_4_} with periodic boundary conditions, which is sufficiently smooth, e.g., $u \in W^{2k+4,\infty}(\ih)$ and $u_t \in W^{2k+3,\infty}(\ih)$. Assume that $v_h\in V_h$ ($v=u,p,q,r$) are solutions to the LDG scheme \eqref{num_lin_4_LDG} with generalized numerical fluxes \eqref{gflux_lin_4_LDG} satisfying $ \theta=\lambda > \ff $.
Then, under the initial condition  \eqref{ini con}--\eqref{im} with  $\ell = k$, we have, for $v=u , p$

  (1) Superconvergence of the numerical flux
\begin{align*}
	\norm{e_{vn}} = \Big(\frac{1}{N} \sum_{j =1}^{N}\mo{\left(v -\hat{v}_h\right)(\xjp ,t)}^2\Big)^{\ff} \le	Ch^{2k+1},
\end{align*}
where $C$ depends on $\norm{u}_{2k+4,\infty}$ and $\norm{u_t}_{2k+3,\infty}$, but is independent of $h$.

(2) Superconvergence of the cell averages
\begin{align*}
	\norm{e_v}_c  = \Big(\frac{1}{N} \sum_{j =1}^{N}\Big|{\frac{1}{h_j}\intijd{(v-v_h)(x,t)}}\Big|^2\Big)^{\ff} \le Ch^{2k+1},
\end{align*}
where depends on $\norm{u}_{2k+4,\infty}$ and $\norm{u_t}_{2k+3,\infty}$, but is independent of $h$.

(3) When $\ell \ge 2$, the function value approximations of $v_h~(v=u,p)$ are $(k+2)$th order superconvergent at generalized Radau points $\mathcal{R}_{j,m}^{\sigma}~(\sigma = \theta, \tlam)$, and the derivative value approximations are $(k+1)$th order superconvergent at generalized Radau points $\mathcal{R}_{j,m}^{\sigma,\star}$, i.e.,
\begin{align*}
	\norm{e_{vr}} =\max_{j\in \z{N}}\mo{(v - v_h)(\mathcal{R}_{j,m}^{\sigma})} \le	Ch^{k+2} ,\quad
	\norm{e^{\star}_{vr}}= \max_{j\in \z{N}}\mo{\partial_x(v - v_h)(\mathcal{R}_{j,m}^{\sigma,\star})} \le	Ch^{k+1},
\end{align*}
where $C$ depends on $\norm{u}_{2k+4,\infty}$ and $\norm{u_t}_{2k+3,\infty}$, but is independent of $h$.

(4) Supercloseness between the GGR projection $ \psig v~(\sigma = \theta, \tlam)$ and numerical solution $v_h$ ($ v = u,p $)
\begin{equation*}
\norm{\psig v - v_h} \le Ch^{k+2},
\end{equation*}
where $C$ depends on $\norm{u}_{k+5,\infty}$ and $\norm{u_t}_{k+4,\infty}$, but is independent of $h$.
\end{theorem}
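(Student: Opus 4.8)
The plan is to route every quantity through the error splitting \eqref{error_lin_4}, $e_v=\epsilon_v+\be{v}$ with $\epsilon_v=v-\il{v}$ and $\il{v}=\psig v-\wl{v}$, and to control the two pieces by genuinely different mechanisms. The piece $\be{v}$ is governed by the supercloseness estimate \eqref{supercloseness} of Theorem~\ref{scth}, applied with the $\ell=k$ initial data, which yields $\norm{\be{u}}+\norm{\be{p}}\lesssim h^{2k+1}$; the piece $\epsilon_v$ is handled purely by the collocation and orthogonality built into the generalized Gauss--Radau projection and the correction functions. Two structural facts drive everything. First, the interpolant reproduces the exact solution at the generalized flux points: since $(v-\psig v)^{(\sigma)}_{\jp}=0$ by \eqref{ggr_pro_2} and $(\wi{v})^{(\sigma)}_{\jp}=0$ for every $i$ by the boundary collocations in \eqref{wi_lin_4}, we get $(\epsilon_v)^{(\sigma)}_{\jp}=0$. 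Second, by Lemma~\ref{pro cf} the correction functions are pointwise small, $\normi{\pnt\wi{v}}\lesssim h^{k+i+1}$, and orthogonal to $P^{k-i-1}(\ce)$.

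For the flux superconvergence (1), the exact solution is single-valued, so $(v-\hat v_h)(\xjp)=(e_v)^{(\sigma)}_{\jp}=(\be{v})^{(\sigma)}_{\jp}$ because $(\epsilon_v)^{(\sigma)}=0$. Bounding this flux value by the one-sided boundary values of $\be{v}$, using the quasi-uniformity $N\sim h^{-1}$ and the trace inverse inequality in \eqref{inv}, gives $\norm{e_{vn}}^2\lesssim \tfrac{1}{N}\norm{\be{v}}_{\Gamma_{\!h}}^2\lesssim\norm{\be{v}}^2\lesssim h^{2(2k+1)}$. For the cell averages (2), I first note $\int_{\ce}(v-\psig v)=0$ (test function $1\in P^{k-1}(\ce)$) and $\int_{\ce}\wi{v}=0$ for $i\le k-1$ by \eqref{pro of correction function 2}, so the only surviving contribution to $\tfrac{1}{h_j}\int_{\ce}\epsilon_v$ is the top correction $w^k_v$, of size $\normi{w^k_v}\lesssim h^{2k+1}$; the contribution of $\be{v}$ is controlled cellwise by $h_j^{-1/2}\norm{\be{v}}_{\ce}$ through Cauchy--Schwarz, and summing with $Nh\gtrsim1$ yields $\norm{e_v}_c\lesssim h^{2k+1}$.

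For the supercloseness (4) I use $\psig v-v_h=\wl{v}+\be{v}$ directly, since $\psig v-\il{v}=\wl{v}$ by \eqref{vil}. Here $\norm{\wl{v}}\lesssim\sum_{i=1}^{\ell}h^{k+i+1}\lesssim h^{k+2}$ (the $i=1$ term dominates) and $\norm{\be{v}}\lesssim h^{k+\ell+1}$, so any $\ell\ge1$ delivers $h^{k+2}$; to reach the sharper regularity stated one runs the supercloseness estimate with $\ell=1$, checking that the fixed $\ell=k$ initial data creates only an $O(h^{k+3})$ mismatch $\be{v}^{(1)}(0)=\sum_{i=2}^{k}\wi{v}(0)$, which is negligible. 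For the Radau points (3) I insert the local projection $P_h v$ of Lemma~\ref{phu pro} and split $v-v_h=(v-P_h v)+(P_h v-\psig v)+(\psig v-v_h)$. The first two terms are $O(h^{k+2})$ at $\mathcal{R}^{\sigma}_{j,m}$ (and $O(h^{k+1})$ for $\partial_x$ at $\mathcal{R}^{\sigma,\star}_{j,m}$) by Lemma~\ref{phu pro}, while the last is estimated in $L^\infty$ by $\normi{\psig v-v_h}\le\normi{\wl{v}}+\normi{\be{v}}\lesssim h^{k+2}+h^{-1/2}\norm{\be{v}}\lesssim h^{k+2}+h^{k+\ell+1/2}$; for the derivative values one further factor $h^{-1}$ from the inverse inequality turns $h^{k+2}$ into $h^{k+1}$.

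The main obstacle is the half power of $h$ that is lost each time one passes from the $L^2$ supercloseness bound on $\be{v}$ to a pointwise or $L^\infty$ statement via the inverse inequality \eqref{inv}. This is exactly why part (3) demands $\ell\ge2$: the term $h^{k+\ell+1/2}$ is $\lesssim h^{k+2}$ only for $\ell\ge2$ (equivalently $k\ge2$ under the fixed $\ell=k$ data), whereas parts (1)--(2) sidestep the loss by remaining in $L^2$/discrete norms and by exploiting the exact flux collocation $(\epsilon_v)^{(\sigma)}=0$. The remaining work is bookkeeping: tracking the $\ell$-dependent Sobolev regularity required by Theorem~\ref{scth} so that the constants match the stated indices in each part.
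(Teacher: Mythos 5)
Your proposal is correct and follows essentially the same route as the paper: the decomposition $e_v=\epsilon_v+\bar e_v$ with the supercloseness bound of Theorem \ref{scth} for $\bar e_v$, exact flux collocation and correction-function orthogonality for parts (1)--(2), the local projection of Lemma \ref{phu pro} plus the inverse inequality for part (3), and the $\ell=1$ interpolant for part (4). Your explicit check that the fixed $\ell=k$ initial data only perturbs the $\ell=1$ supercloseness estimate at a negligible order is a detail the paper passes over silently, but otherwise the two arguments coincide.
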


\begin{proof}
(1) Due to the boundary collocation of $\psig v$ ($ v= u,p $) in \eqref{ggr_pro_2} with $\sigma = \theta, \tilde{\lambda}$ and $\wi{v}$ ($ v = u,p $) in \eqref{wi_lin_4_1}--\eqref{wi_lin_4_2} for $ i\in \z{k}$, we get, after using the inverse inequality and Theorem \ref{scth} with $ \ell = k $
\begin{align*}
	\norm{e_{vn}} &
	= \Big(\frac{1}{N} \sum_{j =1}^{N}\mo{\left(\hat{v}^k_I -\hat{v}_h\right)(\xjp ,t)}^2\Big)^{\ff}
	\le \Big(\frac{1}{N} \sum_{j =1}^{N}C{h}^{-1}\norm{v_I^k -v_h}^2_{\ce\cup I_{j+1}}\Big)^{\ff}\\
	&\le C\norm{v^k_I - v_h}
	\le  Ch^{2k+1},\quad v= u,p,
\end{align*}
where $C$ depends on $\norm{u}_{2k+4,\infty}$ and $\norm{u_t}_{2k+3,\infty}$, but is independent of $h$.

(2) Utilizing the orthogonality of $\psig v$ ($ v= u,p $) in \eqref{ggr_pro_1} with $ \sigma = \theta, \tilde{\lambda} $ and the definition of $\il{v}$ ($ v= u,p $) in \eqref{vil} with $\ell = k$, we have
\begin{align*}
	\left(e_v,1\right)_j =   \leri{W^k_v ,1}_j + \leri{v^k_I - v_h ,1}_j, \quad \forall j \in \z{N}.
\end{align*}
It follows from \eqref{pro of correction function 1}--\eqref{pro of correction function 2} in Lemma \ref{pro cf} and Theorem \ref{scth} that
\begin{align*}
	\norm{e_v}_c \le \norm{w_v^k} + \norm{v^k_I - v_h}  \le Ch^{2k+1},\quad v= u,p,
\end{align*}
where $C$ depends on $\norm{u}_{2k+4,\infty}$ and $\norm{u_t}_{2k+3,\infty}$, but is independent of $h$.

(3) Choosing $\ell\ge2$ for  $\il{v}$ ($ v= u,p $) in Theorem \ref{scth} and using the inverse inequality, we obtain
\begin{align*}
	\norm{ \be{v} }_{\infty} \le Ch^{k+\frac{5}{2}}, ~~ v=u,p.
\end{align*}
This, together with Lemma \ref{phu pro}, produces
\begin{align*}
	\mo{(u - u_h)(\mathcal{R}_{j,m}^{\theta})}  \le \mo{(u - P_h u)(\mathcal{R}_{j,m}^{\theta})} + \norm{P_h u - P_{\theta}u}_{\infty} + \norm{\wl{u}}_{\infty} + \norm{\be{u} }_{\infty}  \le Ch^{k+2},
\end{align*}
where $C$ depends on $\norm{u}_{k+\ell+4,\infty}$ and $\norm{u_t}_{k+\ell+3,\infty}$, but is independent of $h$.
The proofs for $|(p - p_h)(\mathcal{R}_{j,m}^{\tlam})| $, $ |\partial_x(u - u_h)(\mathcal{R}_{j,m}^{\theta,\star})|$ and $ | \partial_x(p - p_h)(\mathcal{R}_{j,m}^{\tlam,\star})|$ are analogous, and details are omitted.

(4) For $\il{v}$ ($v=u,p$) with $\ell =1$, by \eqref{vil}, Theorem \ref{scth} in combination with \eqref{pro of correction function 1},  we get
\begin{align*}
\norm{\psig v - v_h} \le \norm{v^1_I - v_h} + \norm{W_v^1} \le Ch^{k+2},\quad v=u,p,
\end{align*}
where $C$ depends on $\norm{u}_{k+5,\infty}$ and $\norm{u_t}_{k+4,\infty}$, but is independent of $h$. This finishes the proof of Theorem \ref{scorderth}.
\end{proof}

\section{Extension to mixed boundary conditions}\label{extensions}
Consider the problem \eqref{eq_lin_4_} with mixed boundary conditions
\begin{align}\label{bd mixed}
	u(0,t) = g_1(t),\quad u_x(2\pi,t) = g_2(t), \quad u_{xx}(0,t) = g_3(t), \quad u_{xxx}(2\pi,t) = g_4(t).
\end{align}
The numerical fluxes are taken as
\begin{align}\label{fluxm}
	\leri{ \hat{u}_h, \hat{p}_h, \hat{q}_h, \hat{r}_h  }_{\jp}= \left\{
	\begin{aligned}
		&\big( g_1,                 p_h^+,    g_3,              r^+_h                 \big)_{\ff},  ~&   &j=0, \\
		&\big( u^{\thet}_h,p^{\tla}_h,q^{\la}_h,r^{\tilde{\thet}}_h   \big)_{\jp},  ~&   &j=1,\red{\ldots},N-1, \\
		&\big( u_h^-,    g_2,               q^-_h,  g_4                           \big)_{N+\ff},  ~&  &j=N.
	\end{aligned}
	\right.
\end{align}
Accordingly, the global projections $P_{\sigma}v$ in \eqref{ggr_pro} are modified to the piecewise global projections $\tilde P_{\sigma}v$. To be more specific,
\begin{equation}\label{ggrmixed}
\begin{aligned}
	&	\left\{
\begin{aligned}
		& (\tilde{P}_{\theta} u, \phi)_j = (u, \phi)_j, & &\forall \phi \in P^{k-1}(\ce),		\\
		& (\tilde{P}_{\theta} u )^{(\theta)}_{\jp}= u^{(\theta)}_{\jp},& &j\in \z{N-1},		 \\
		&(\tilde{P}_{\theta} u )^{-}_{N+\ff} = u^{-}_{N+\ff},& &j=N,		
\end{aligned}
	\right.
	& &
	\left\{
\begin{aligned}
		& (\tilde{P}_{\tilde{\lambda}} p, \phi)_j = (p, \phi)_j, & &\forall \phi \in P^{k-1}(\ce),	\\
		& (\tilde{P}_{\tilde{\lambda}} p )^{(\tilde{\lambda})}_{\jm} = p^{(\tilde{\lambda})}_{\jm},& &j\in \z{N}\backslash \{1\},	 \\
		& (\tilde{P}_{\tilde{\lambda}} p )^{+}_{\ff} = p^{+}_{\ff},& &j=1,
\end{aligned}
	\right.  \\
	&\left\{
\begin{aligned}
		& (\tilde{P}_{\lambda} q, \phi)_j = (q, \phi)_j, & &\forall \phi \in P^{k-1}(\ce),		\\
		& (\tilde{P}_{\lambda} q )^{(\lambda)}_{\jp}= q^{(\lambda)}_{\jp},& &j\in \z{N-1},		 \\
		&(\tilde{P}_{\lambda} q )^{-}_{N+\ff} = q^{-}_{N+\ff},& &j=N,		
\end{aligned}
	\right.
	& &	
	\left\{
\begin{aligned}
		& (\tilde{P}_{\tilde{\theta}} r, \phi)_j = (r, \phi)_j, & &\forall \phi \in P^{k-1}(\ce),	\\
		& (\tilde{P}_{\tilde{\theta}} r )^{(\tilde{\theta})}_{\jm} = p^{(\tilde{\theta})}_{\jm},& &j\in \z{N}\backslash \{1\},	 \\
		& (\tilde{P}_{\tilde{\theta}} r )^{+}_{\ff} = r^{+}_{\ff},& &j=1,
	\end{aligned}
	\right.
\end{aligned}
\end{equation}
and, following \cite{meng2016optimal,liu2021sc}, one has the following optimal approximation property for $\tilde P_{\sigma}v$ $(v= u, q, p, r; \sigma = \theta, \tilde \lambda, \lambda, \tilde \theta)$
\begin{align*}
	\norm{v-\tilde P_{\sigma}v}_{\ce} + h^{\ff}\norm{v-\tilde P_{\sigma}v}_{\infty,\ce} \le Ch^{k+\frac{3}{2}}\norm{v}_{k+1,\infty}.
\end{align*}

The functions $\wi{u}$, $\wi{p}$, $\wi{q}$ and $\wi{r} \in V_h$, $i\in \z{k}$ given below differ from definitions \eqref{wi_lin_4_1}--\eqref{wi_lin_4_4} in Section \ref{correction function} mainly in terms of boundary collocations,
\begin{subequations}\label{wim}
\begin{align}
		&(\wi{u}-\bhj \di \wimo{p},\phi)_j = 0,& &\left(\wi{u}\right)^{\thet}_{\jp} = 0, ~j\in \z{N-1}, && \left(\wi{u}\right)^{-}_{N+\ff}   = 0,\\
		&(\wi{p}-\bhj \di \wimo{q},\phi)_j = 0,& &\left(\wi{p}\right)^{\tla}_{\jp} = 0,  ~j\in \z{N-1}, && \left(\wi{p}\right)^{+}_{\ff} = 0,\\
		&(\wi{q}-\bhj \di \wimo{r},\phi)_j = 0,& &\left(\wi{q}\right)^{\la}_{\jp} = 0, ~ j\in \z{N-1}, && \left(\wi{q}\right)^{-}_{N+\ff} = 0,\\
		&( \wi{u} +  \wi{p} + \wi{r}+\bhj \di \partial_t \wimo{u},\phi)_j = 0,& & \left(\wi{r}\right)^{\tthet}_{\jp}  =0, ~j\in \z{N-1}, && \left(\wi{r}\right)^{+}_{\ff}  =0,
	\end{align}
\end{subequations}
where $ \phi \in P^{k-1}(\ce)$, $j\in \z{N}$ and
\begin{align*}
	\wz{u} = u-\tpt u,\quad \wz{p} = p-\tptl p,\quad \wz{q} = q-\tpl q,\quad \wz{r} = r-\tptt r.
\end{align*}
For $\ell\in \z{k}$, we define the correction functions as
\begin{align}\label{wlmixed}
	\wl{v}  = \sum_{i =1}^{\ell} \wi{v}, ~~ v = u,p,q,r,
\end{align}
and the interpolations functions as
\begin{align}\label{ilm}
	\il{v} = \tilde{P}_{\sigma} v - \wl{v}, ~~	v = u,p,q,r,
\end{align}
with $\tilde{P}_{\sigma} v$ ($  \sigma = \theta,\tlam,\lambda,\tilth$) being projections defined in \eqref{ggrmixed}.

Using the procedure similar to that in Section \ref{correction function}, we can obtain the existence, uniqueness, superconvergence property and orthogonality for functions $\wi{v}$ ($v = u,p,q,r$) defined in \eqref{wim} with $ i\in \z{k} $, essentially following Lemma \ref{pro cf}. The main difference is that systems \eqref{sys i=1} and \eqref{sysio} can be decoupled since the circulant matrices now reduce to
	{\small
	\begin{equation*}
	 A_{u}=\left(\begin{array}{ccccc}
			\theta & (-1)^k\tilde{\theta} & & & \\
			& \theta & (-1)^k\tilde{\theta} & & \\
			& & \ddots & \ddots & \\
			& & & \theta & (-1)^k\tilde{\theta} \\
			0 & & & & 1
		\end{array}\right),
		A_{p}=\left(\begin{array}{ccccc}
			0 & (-1)^k & & & \\
			& \tlam & (-1)^k\lambda & & \\
			& & \ddots & \ddots & \\
			& & & \tlam & (-1)^k\lambda \\
			(-1)^k\lambda & & & & \tlam
		\end{array}\right),
	\end{equation*}
	\begin{equation*}
		A_{q}=\left(\begin{array}{ccccc}
			\lambda & (-1)^k\tlam & & & \\
			& \lambda & (-1)^k\tlam & & \\
			& & \ddots & \ddots & \\
			& & & \lambda & (-1)^k\tlam \\
			0 & & & & 1
		\end{array}\right),
		A_{r}=\left(\begin{array}{ccccc}
			0 & (-1)^k & & & \\
			& \tilth & (-1)^k\theta & & \\
			& & \ddots & \ddots & \\
			& & & \tilth & (-1)^k\theta \\
			(-1)^k\theta & & & & \tilth
		\end{array}\right),
\end{equation*} }
due to the exact collocation at one of the boundary point in \eqref{wim}.
A superconvergent bound of interpolation errors analogous to \eqref{u with uli} in Lemma \ref{pro of interpolation functions} can thus be derived.

To be compatible with superconvergence property for mixed boundary conditions,  for $u_0 \in W^{k+\ell+4, \infty}(\ih)$ and $u_t(0) \in W^{k+\ell+1, \infty}(\ih)$ ($\ell\in \z{k}$), we modify the numerical initial condition \eqref{ini con} to
\begin{align}\label{inimixed}
	r_h(x,0) = \il{r}(x,0) = \tptt r_0(x) - \wl{r}(x,0),\quad r_0 = \partial_x^3 u_0(x),
\end{align}
with $ u_h(x,0)$, $p_h(x,0)$ and $q_h(x,0)$ still being the solutions to \eqref{im}, where $q_0 = \partial_x^2 u_0(x), p_0 = \partial_x u_0(x)$.

We are now ready to show superconvergence results of LDG methods using numerical fluxes \eqref{fluxm} for the case with mixed boundary conditions.
\begin{theorem}\label{scth mixed}
Suppose $u$ is the exact solution of \eqref{eq_lin_4_} with mixed boundary conditions \eqref{bd mixed}, which is sufficiently smooth, e.g., $u \in W^{2k+4,\infty}(\ih)$ and $u_t \in W^{2k+3,\infty}(\ih)$. Assume that $v_h \in V_h$ ($v=u,p,q,r$) are the solutions of LDG scheme with generalized numerical fluxes \eqref{fluxm} satisfying $ \theta=\lambda > \ff $.
Then, we have the following superconvergent results.

$\rrom 1$ Supercloseness between interpolation functions and LDG solutions:
\begin{equation}\label{sclose mixed}
	\norm{\be{u}(t)} + \norm{\be{p}(t)} + \leri{\inttdtau{ (\norms{\be{q}} + \norms{\be{r}}) }}^{\ff} \le \chkl,
\end{equation}
where $C$ depends on $\norm{u}_{k+\ell+4,\infty}$ and $\norm{u_t}_{k+\ell+3,\infty}$, but is independent of $h$.

$\rrom 2$ Superconvergence results for $ v = u,p$:

(1) Superconvergence of the numerical flux
\begin{align*}
	\norm{e_{vn}} = \Big(\frac{1}{N} \sum_{j =1}^{N}\mo{\left(v -\hat{v}_h\right)(\xjp ,t)}^2\Big)^{\ff} \le	Ch^{2k+1},
\end{align*}
where $C$ depends on $\norm{u}_{2k+4,\infty}$ and $\norm{u_t}_{2k+3,\infty}$, but is independent of $h$.

(2) Superconvergence of the cell averages
\begin{align*}
	\norm{e_v}_c  = \Big(\frac{1}{N} \sum_{j =1}^{N}\Big|{\frac{1}{h_j}\intijd{(v-v_h)(x,t)}}\Big|^2\Big)^{\ff} \le Ch^{2k+1},
\end{align*}
where $C$ depends on $\norm{u}_{2k+4,\infty}$ and $\norm{u_t}_{2k+3,\infty}$, but is independent of $h$.

(3) When $\ell \ge 2$, the function value approximations of $v_h~(v=u,p)$ are $(k+2)$th order superconvergent at generalized Radau points $\mathcal{R}_{j,m}^{\sigma}~(\sigma = \theta, \tlam)$, and the derivative value approximations are $(k+1)$th order superconvergent at generalized Radau points $\mathcal{R}_{j,m}^{\sigma,\star}$, i.e.,
\begin{align*}
	\norm{e_{vr}} =\max_{j\in \z{N}}\mo{(v - v_h)(\mathcal{R}_{j,m}^{\sigma})} \le	Ch^{k+2} ,\quad
	\norm{e^{\star}_{vr}}= \max_{j\in \z{N}}\mo{\partial_x(v - v_h)(\mathcal{R}_{j,m}^{\sigma,\star})} \le	Ch^{k+1},
\end{align*}
where $C$ depends on $\norm{u}_{k+\ell+4,\infty}$ and $\norm{u_t}_{k+\ell+3,\infty}$, but is independent of $h$.

(4) Supercloseness between the projection $ \tilde P_{\sigma}v~(\sigma = \theta, \tlam)$ and LDG solution $v_h$ ($ v = u,p $)
\begin{equation*}
	\norm{\tilde P_{\sigma}v - v_h} \le Ch^{k+2},
\end{equation*}
where $C$ depends on $\norm{u}_{k+5,\infty}$ and $\norm{u_t}_{k+4,\infty}$, but is independent of $h$.
\end{theorem}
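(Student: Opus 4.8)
The plan is to establish part $\rrom 1$ (supercloseness) by an energy argument paralleling the proof of Theorem \ref{scth}, and then to deduce part $\rrom 2$ from it exactly as Theorem \ref{scorderth} is deduced from Theorem \ref{scth}. First I would record the error equations: subtracting the LDG scheme with the boundary fluxes \eqref{fluxm} from the exact first-order system and inserting the decomposition \eqref{error_lin_4} with the modified interpolation functions \eqref{ilm}, one obtains an analog of \eqref{orthde} in which the operators $\hop{\cdot}{\cdot}{\cdot}$ in the two boundary cells $j=1$ and $j=N$ carry the special one-sided fluxes. Because $g_1,\dots,g_4$ are the exact traces of $u,p,q,r$ at the physical endpoints, the flux errors $\hat e_u,\hat e_q$ at $x=0$ and $\hat e_p,\hat e_r$ at $x=2\pi$ vanish identically; this is precisely the feature of \eqref{fluxm} that drives the argument. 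I would also first note that the correction-function analysis of Lemma \ref{pro cf} carries over verbatim, and is in fact simpler here, since the systems \eqref{wim} decouple into the triangular matrices $A_u,A_p,A_q,A_r$ rather than circulant ones, and that the numerical-initial-condition estimate of Lemma \ref{lemma sc} persists once \eqref{ini con} is replaced by \eqref{inimixed}, still using the discrete Poincar\'e inequality \eqref{dp} and Lemma \ref{derivative jump}.

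The heart of part $\rrom 1$ is the energy estimate. I would take the same test-function combinations as in Theorem \ref{scth} — first $(v,\phi,\psi,\zeta)=(\be{u},\be{p},\be{q},-\be{r})$, then, after differentiating the analog of \eqref{orthde4} in time, $(-\be{q},\be{r},\bet{u},\be{p})$ — and sum. The cancellations that in the periodic case come from \eqref{DG operator} must be replaced by a boundary-corrected identity: for mixed fluxes one has, following \cite{meng2016optimal,liu2021sc}, $\hop{\theta_1}{w}{v}+\hop{\theta_2}{v}{w}=(\tilde\theta_2-\theta_1)\sum_{j=2}^N\jump{w}_{\jm}\jump{v}_{\jm}+\mathcal B$, where $\mathcal B$ collects contributions at $x=0$ and $x=2\pi$. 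For the cross-pairings $(\be{u},\be{r})$ and $(\be{q},\be{p})$ the weights remain complementary ($\tilde\theta_2-\theta_1=0$), and the terms in $\mathcal B$ involve exactly the endpoint flux errors $\hat e_u,\hat e_q,\hat e_p,\hat e_r$ forced to zero by \eqref{fluxm}, so these pairings still cancel; the self-pairings such as $\hop{\theta}{\be{u}}{\be{u}}$ retain a favorable non-positive sign, the $x=0$ contribution being annihilated by the exact-data flux and the $x=2\pi$ contribution having the correct sign. The remaining right-hand side is then controlled by the correction-function bounds \eqref{pro of correction function 1}, Young's inequality, and the modified initial estimate, after which Gronwall's inequality yields \eqref{sclose mixed}.

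With $\rrom 1$ in hand, part $\rrom 2$ is routine. Superconvergence of the numerical flux and cell averages (items (1)--(2)) follows from the boundary collocation and orthogonality of the projections $\tilde P_\sigma$ in \eqref{ggrmixed} and of the correction functions, combined with the inverse inequality \eqref{inv}, exactly as in parts (1)--(2) of Theorem \ref{scorderth}. The Radau-point estimates (item (3)) follow by taking $\ell\ge2$, using $\norm{\be{v}}_{\infty}\lesssim h^{k+\frac{5}{2}}$ via \eqref{inv}, and inserting the local projection $P_h$ together with Lemma \ref{phu pro}; the supercloseness to $\tilde P_\sigma v$ (item (4)) follows by taking $\ell=1$.

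I expect the main obstacle to be the boundary-term bookkeeping in the corrected skew-symmetry identity. One must verify, for each of the two energy pairings and at each physical endpoint, that the surviving contribution in $\mathcal B$ is either one of the flux errors set to zero by \eqref{fluxm} or an endpoint square term of non-positive sign, since a single non-cancelling boundary term of the wrong sign would destroy the closure of the energy estimate; establishing the precise mixed-boundary form of \eqref{DG operator} consistent with the projections \eqref{ggrmixed} is therefore the delicate point of the whole argument.
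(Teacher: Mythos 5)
Your proposal is correct and follows essentially the same route as the paper, whose own proof is only a sketch stating that the argument of Theorems \ref{scth}--\ref{scorderth} carries over once the generalized skew-symmetry identity \eqref{DG operator} is replaced by its boundary-corrected form; you correctly identify that identity, the vanishing of the endpoint flux errors forced by \eqref{fluxm}, and the sign/cancellation bookkeeping at $x=0$ and $x=2\pi$ as the delicate points. The only quibble is that $A_p$ and $A_r$ in \eqref{wim} are not quite triangular (they retain a corner entry), but the decoupling you rely on still holds.
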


\begin{proof}
Since the proof line is similar to that in Theorems \ref{scth}--\ref{scorderth}, we  need only to pay attention to the following modified generalized skew-symmetry property
\begin{align*}
		& \bb{H}^{\theta_1}(w,v) + \bb{H}^{\theta_2}(v,w) \\
      & \ind = 		(\tilde{\theta}_2 - \theta_1)\sum_{j=2}^{N-1}\jump{w}_{\jm}\jump{v}_{\jm}
		 + (\tilde{\theta}_2  - \theta_1) w^{+}v^+{|_{\frac{1}{2}}}
		 + \theta_1 w^- v^+{|_{\frac{1}{2}}}
		 + \theta_2 v^- w^+{|_{\frac{1}{2}}}\\
		& \ind \ind  +(\tilde{\theta}_2  - \theta_1) w^{-}v^-{|_{N+\frac{1}{2}}} - \tilde{\theta}_1 w^+ v^-{|_{N+\frac{1}{2}}}  - \tilde{\theta}_2 v^+ w^-{|_{N+\frac{1}{2}}},~w,v\in H^1(\ih),
\end{align*}
which is useful in proving supercloseness result \eqref{sclose mixed}.
More details are omitted.
\end{proof}

\section{Numerical experiments}\label{experiments}

Based on the idea of \cite{cao2017upwind-biased}, let us first describe the implementation of numerical initial condition, and for \eqref{eq_lin_4_} with periodic boundary condition, $u_h(\cdot, 0)$ can be chosen by the following procedure.
\begin{enumerate}
  \item [(1)] Let $r_0  = \partial_x^3 u_0$. Calculate $\ptt r_0$ by \eqref{ggr_pro} and $\wi{r}$, $i\in\z{k}$ by \eqref{wi_lin_4}--\eqref{wi initial}.
  \item [(2)] Set $r_h = \ptt r_0 - \wl{r}$, then $e_r = r -\ptt r_0 + \wl{r}$. By \eqref{orthj2} and \eqref{ggr_pro}, we get, for $\phi \in P^{k}(\ce)$ and $j\in \z{N}$,
\begin{align*}	
	(e_r,\phi) = - (\pl q - q_h,\phi_x) + \sumjn {(\pl q - q_h)}^{(\lambda)} \phi ^-{\ljp} - \sumjn {(\pl q - q_h)}^{(\lambda)} \phi^+{\ljm}.
	\end{align*}
Using integration by parts for $(e_r,\phi)$,
	\begin{align*}
		(e_r,\phi) = \leri{(\bhj \di e_r)_x,\phi} = -\leri{\bhj \di e_r, \phi_x}
	\end{align*}
implied by $\bhj \di e_r(\xjp^-)  = ( e_r,1)_j = 0$, $\bhj \di e_r(\xjm^+)  = 0$, and taking $ {(\pl q - q_h)}^{\la}_{j-\frac12} = 0$, we have
$$(\pl q - q_h,\phi_x) = \leri{\bhj \di e_r, \phi_x},$$
which can be used to determine $\pl q - q_h$.
  \item [(3)] Calculate $ e_q $, and as in (2), taking $ {(\ptl p - p_h)}^{\tla}_{j-\frac12} = 0 $, compute $\ptl p - p_h $ by $$(\ptl p - p_h,\psi_x) = \leri{\bhj \di e_q, \psi_x}.$$
\item [(4)] Calculate $ e_p $, and as in (2),  taking $ {(\pt u - u_h)}^{\thet}_{j-\frac12} = 0 $, compute $\pt u - u_h$ by $$(\pt u - u_h,\zeta_x) = \leri{\bhj \di e_p, \zeta_x}.$$
\item [(5)] Calculate $ u_h (0) = (\pt u - (\pt u - u_h))(0).$
\end{enumerate}

Next, we provide some numerical examples to support theoretical results. We adopt the above special initial solution and use the third-order explicit total variation diminishing Runge--Kutta method for time discretization with $\Delta t = CFL * h^4$, where $CFL=0.001$ for $P^1$, $CFL=0.0001$ for $P^2$, $CFL=0.00001$ for $P^3$.

\begin{example}\label{ex2}
Consider
\begin{equation}\label{eq2}
\left\{
\begin{array}{lr}
u_t + u_x + u_{xx} + u_{xxxx} = 0,\\
u(x,0) = \sin(x),
\end{array}
\right.
\end{equation}
with periodic boundary conditions. The exact solution is
\begin{equation}\label{exact}
u(x,t) = \sin(x-t).
\end{equation}
\end{example}

\begin{table}[ht]\centering
	\caption{Errors and orders for Example \ref{ex2} with $ \theta=0.8$, $\lambda=1.2$,  $ T=0.1 $, $k=1$.}	
	\setlength{\tabcolsep}{3.2mm}
	\resizebox{\textwidth}{!}
	{\begin{tabular}{ccccccccccc}
		\toprule
		$N$   & $\norm{e_{un}}$ & Order & $\norm{e_{u}}_c$ & Order & $\norm{e_{ur}}$& Order & $\norm{e^{\star}_{ur}}$ & Order                        & $\norm{\pt u - u_h} $   & Order \\
		\hline
		16  & 1.79E-04         & --  & 4.72E-04         & --  & 6.11E-04                   & --  & 1.76E-02                       & --                         & 1.50E-03 & --   \\
		32  & 1.89E-05         & 3.25  & 6.11E-05         & 2.95  & 6.89E-05                   & 3.15  & 4.50E-03                       & 1.96                         & 1.98E-04 & 2.95   \\
		64  & 2.09E-06         & 3.18  & 7.69E-06         & 2.99  & 8.07E-06                   & 3.09  & 1.10E-03                       & 1.99                         & 2.50E-05 & 2.99   \\
		128 & 2.43E-07         & 3.11  & 9.62E-07         & 3.00  & 9.78E-07                   & 3.04  & 2.85E-04                       & 2.00                         & 3.13E-06 & 3.00   \\
		\toprule
		$N$   & $\norm{e_{pn}}$ & Order & $\norm{e_{p}}_c$ & Order & $\norm{e_{pr}}$& Order & $\norm{e^{\star}_{pr}}$ & Order  & $\norm{\ptl p - p_h} $   & Order \\
		\hline
		16  & 5.16E-04         & --  & 1.78E-04         & --  & 1.20E-03                   & --  & 3.20E-03                       & --  & 6.07E-04 & --   \\
		32  & 6.34E-05         & 3.02  & 1.88E-05         & 3.24  & 1.56E-04                   & 2.99  & 7.78E-04                       & 2.02 & 7.11E-05 & 3.09   \\
		64  & 7.82E-06         & 3.02  & 2.09E-06         & 3.17  & 1.95E-05                   & 3.00  & 1.95E-04                       & 2.00                         & 8.54E-06 & 3.06   \\
		128 & 9.69E-07         & 3.01  & 2.43E-07         & 3.11  & 2.44E-06                   & 3.00  & 4.89E-05                       & 1.99                         & 1.05E-06 & 3.03   \\
		\toprule
		$N$   & $\norm{e_{qn}}$ & Order & $\norm{e_{q}}_c$ & Order & $\norm{e_{qr}}$& Order & $\norm{e^{\star}_{qr}}$ & Order                 & $\norm{\pl q - q_h} $   & Order \\
		\hline
		16  & 3.69E-04         & --  & 5.13E-04         & --  & 1.00E-03                   & --  & 4.80E-03                       & --                         & 1.50E-03 & --   \\
		32  & 5.52E-05         & 2.74  & 6.33E-05         & 3.02  & 1.38E-04                   & 2.91  & 1.40E-03                       & 1.80                         & 1.96E-04 & 2.95   \\
		64  & 7.50E-06         & 2.88  & 7.81E-06         & 3.02  & 1.76E-05                   & 2.97  & 3.63E-04                       & 1.93                         & 2.48E-05 & 2.98   \\
		128 & 9.76E-07         & 2.94  & 9.69E-07         & 3.01  & 2.21E-06                   & 2.99  & 9.28E-05                       & 1.97                         & 3.12E-06 & 2.99   \\
		\toprule
		$N$   & $\norm{e_{rn}}$ & Order & $\norm{e_{r}}_c$ & Order & $\norm{e_{rr}}$& Order & $\norm{e^{\star}_{rr}}$ & Order                  & $\norm{\ptt r - r_h} $   & Order \\
		\hline
		16  & 1.20E-03         & --  & 3.66E-04         & --  & 1.30E-03                   & --  & 2.55E-02                       & --                         & 3.30E-03 & --   \\
		32  & 1.60E-04         & 2.94  & 5.51E-05         & 2.73  & 1.47E-04                   & 3.10  & 6.40E-03                       & 2.00                         & 4.49E-04 & 2.89   \\
		64  & 2.03E-05         & 2.98  & 7.50E-06         & 2.88  & 1.74E-05                   & 3.07  & 1.60E-03                       & 2.01                         & 5.78E-05 & 2.96   \\
		128 & 2.56E-06         & 2.99  & 9.76E-07         & 2.94  & 2.12E-06                   & 3.04  & 3.93E-04                       & 2.01                         & 7.33E-06 & 2.98\\
		\bottomrule		
	\end{tabular}}\label{ex2k1b}
\end{table}

\begin{table}[ht]\centering
	\caption{Errors and orders for Example \ref{ex2} with $ \theta=0.8$, $\lambda=1.2$, $ T=0.1 $, $k=2,3$.}	
	{\begin{tabular}{cccccccccc}
			\toprule
			\multicolumn{1}{l}{} & $N$   & $\norm{e_{un}}$ & Order & $\norm{e_{u}}_c$ & Order & $\norm{e_{ur}}$& Order & $\norm{e^{\star}_{ur}}$ & Order \\
			\hline
			\multirow{4}{*}{$ P^2 $}  & 8    & 6.93E-06 & --      & 4.24E-05 & --      & 4.55E-04 & --      & 2.97E-03 & --                \\
			&16   & 1.01E-07 & 6.10      & 1.35E-06 & 4.97      & 3.10E-05 & 3.88      & 3.74E-04 & 2.99                \\
			&32   & 2.53E-09 & 5.31      & 4.25E-08 & 4.99      & 1.97E-06 & 3.98      & 4.75E-05 & 2.98                \\
			&64   & 7.99E-11 & 4.98      & 1.33E-09 & 5.00      & 1.24E-07 & 3.99      & 5.95E-06 & 3.00      \\
			\hline
			\multirow{4}{*}{$ P^3 $}  & 10   & 9.51E-09 &--      & 2.25E-08 &--      & 1.00E-05 &--      & 7.44E-04 &--       \\
			& 15   & 3.89E-10 & 7.89     & 1.54E-09 & 6.62     & 1.41E-06 & 4.83     & 1.56E-04 & 3.85      \\
			& 20   & 4.10E-11 & 7.82     & 2.21E-10 & 6.74     & 3.43E-07 & 4.91     & 5.05E-05 & 3.92      \\
			& 25   & 5.69E-12 & 8.85     & 4.81E-11 & 6.84     & 1.14E-07 & 4.95     & 2.09E-05 & 3.96      \\
			\toprule
	\end{tabular}}\label{ex2k123}
\end{table}

\begin{figure}[ht]\centering
	\centering
\includegraphics[scale=0.6]{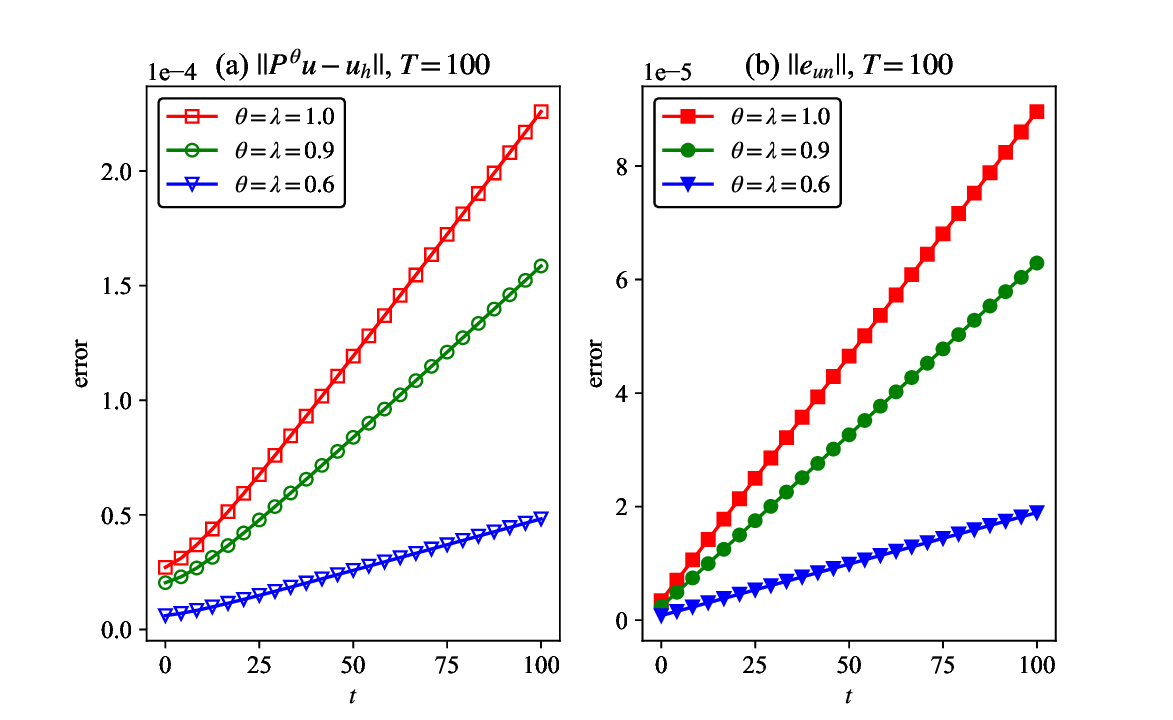}
	\caption{Time evolution of the error for Example \ref{ex2} with different weights, $ k=2 $, $ N=16$, $T=100$. }
	\label{fig:fig1label}
\end{figure}

The errors and orders for $u_h $, $p_h$, $ q_h$, and $r_h$ with generalized numerical fluxes and $1\le k \le 3$ are shown in Tables \ref{ex2k1b}--\ref{ex2k123}. We can see that
the errors of numerical fluxes and cell averages achieve $(2k+1)$th order, and the function value error achieve $(k+2)$th ($(k+1)$th) order at generalized (derivative) Radau points. Also, the error between GGR projection and numerical solution is of $ (k+2) $th order. This demonstrates that the results in Theorem \ref{scorderth} are valid. In addition, the time evolution of the error up to $T=100$ for Example \ref{ex2} is given in Figure \ref{fig:fig1label}, from which we can see that, at least for $\norm{\pt u -u_h}$ and $\norm{e_{un}}$, the generalized fluxes ($\theta = \lambda = 0.6 $ and $\theta =\lambda = 0.9$) produce a slower error growth when compared with upwind and alternating fluxes ($\theta =\lambda = 1$).

\begin{example}\label{exnonper}
In this example, consider the problem \eqref{eq2} with mixed boundary conditions
\begin{equation}\label{expm}
		u(0,t) = g_1(t),\quad u_x(2\pi,t) = g_2(t), \quad u_{xx}(0,t) = g_3(t), \quad u_{xxx}(2\pi,t) = g_4(t),
\end{equation}
where $g_i(t)$ ($i=1,2,3,4$) are suitably chosen such that the exact solution is \eqref{exact}.
\end{example}

\begin{table}[ht]\centering
	\caption{Errors and orders for Example \ref{exnonper} with mixed boundary conditions \eqref{expm} and $ \theta=1.1$, $\lambda=0.9$, $ T=0.1.$}
	{\begin{tabular}{cccccccccc}
			\toprule
			\multicolumn{1}{l}{} & $N$   & $\norm{e_{un}}$ & Order & $\norm{e_{u}}_c$ & Order & $\norm{e_{ur}}$& Order & $\norm{e^{\star}_{ur}}$ & Order \\
			\hline
				& 16  & 2.05E-04 & -- & 7.85E-04 & -- & 1.20E-03 & --  & 3.10E-03 & -- \\
				& 32  & 1.40E-05 & 3.87 & 9.44E-05 & 3.05 & 1.56E-04 & 2.98  & 8.36E-04 & 1.90 \\
				& 64  & 1.27E-06 & 3.47 & 1.17E-05 & 3.01 & 2.07E-05 & 2.91  & 2.13E-04 & 1.97 \\
				
				\multirow{-4}{*}{$P^1$} & 128 & 1.37E-07 & 3.21 & 1.46E-06 & 3.00 & 2.67E-06 & 2.96                        & 5.39E-05 & 1.98 \\
				\hline
				& 8   & 1.51E-05 & -- & 2.34E-05 & -- & 7.85E-05 & --                        & 2.80E-03 & -- \\
				& 16  & 1.90E-07 & 6.31 & 7.26E-07 & 5.01 & 7.40E-06 &  3.41 & 3.87E-04 & 2.88 \\
				& 32  & 3.23E-09 & 5.88 & 2.19E-08 & 5.05 & 5.00E-07 &  3.89 & 4.94E-05 & 2.97 \\
				
				\multirow{-4}{*}{$P^2$} & 64  & 1.03E-10 & 4.97 & 6.76E-10 & 5.02 & 4.15E-08 &  3.59 & 6.24E-06 & 2.99 \\
				\hline
				& 10  & 2.75E-08 & -- & 3.75E-08 & -- & 4.92E-06 & --                        & 6.68E-05 & -- \\
				& 15  & 1.63E-09 & 6.97 & 2.17E-09 & 7.02 & 6.41E-07 & 5.03                        & 1.33E-05 & 3.97 \\
				& 20  & 2.16E-10 & 7.02 & 2.87E-10 & 7.03 & 1.53E-07 & 4.99                        & 4.22E-06 & 4.00 \\
				\multirow{-4}{*}{$P^3$} & 25  & 4.56E-11 & 6.98 & 6.03E-11 & 6.99 & 5.05E-08 & 4.96                        & 1.74E-06 & 3.96   \\
			\toprule
		\end{tabular}\label{exmk123}}
\end{table}

Numerical errors and orders with generalized fluxes ($\theta=1.1$, $\lambda=0.9$) are provided in Table \ref{exmk123}, illustrating that the theoretical results in Theorem \ref{scth mixed} with mixed boundary conditions are true, even for  $\theta \neq \lambda$.

\begin{table}[ht]\centering
	\caption{Errors and orders for Example \ref{exnonper} with Dirichlet boundary conditions \eqref{expd} and $ \theta=1.1$, $\lambda=0.9$, $ T=0.1.$}	
	{\begin{tabular}{cccccccccc}
			\toprule
			\multicolumn{1}{l}{} & $N$   & $\norm{e_{un}}$ & Order & $\norm{e_{u}}_c$ & Order & $\norm{e_{ur}}$& Order & $\norm{e^{\star}_{ur}}$ & Order \\
			\hline
			\multirow{4}{*}{$P^1$} & 16  & 1.30E-03 & -- & 1.60E-03 & -- & 3.40E-03 & -- & 6.30E-03 & -- \\
			& 32  & 1.59E-04 & 3.00 & 2.02E-04 & 3.00 & 4.22E-04 & 3.01 & 1.20E-03 & 2.45 \\
			& 64  & 1.03E-05 & 3.94 & 1.73E-05 & 3.55 & 3.12E-05 & 3.76 & 2.24E-04 & 2.37 \\
			& 128 & 6.27E-07 & 4.04 & 1.71E-06 & 3.34 & 2.82E-06 & 3.47 & 5.44E-05 & 2.04 \\
			\hline
			\multirow{4}{*}{$P^2$} & 8   & 1.43E-05 & -- & 2.98E-05 & -- & 1.08E-04 & -- & 2.90E-03 & -- \\
			& 16  & 1.77E-07 & 6.33 & 7.73E-07 & 5.27 & 6.99E-06 & 3.95 & 3.87E-04 & 2.88 \\
			& 32  & 1.47E-09 & 6.92 & 2.25E-08 & 5.10 & 4.94E-07 & 3.82 & 4.94E-05 & 2.97 \\
			& 64  & 6.29E-11 & 4.55 & 6.90E-10 & 5.03 & 4.14E-08 & 3.58 & 6.24E-06 & 2.99 \\
			\hline
			\multirow{4}{*}{$P^3$} & 10  & 3.20E-06 & -- & 2.98E-06 & -- & 9.03E-06 & -- & 7.03E-05 & -- \\
			& 15  & 1.77E-07 & 7.13 & 1.70E-07 & 7.06 & 8.43E-07 & 5.85 & 1.34E-05 & 4.08 \\
			& 20  & 2.31E-08 & 7.09 & 2.24E-08 & 7.05 & 1.76E-07 & 5.45 & 4.24E-06 & 4.00 \\
			& 25  & 4.75E-09 & 7.08 & 4.65E-09 & 7.05 & 5.52E-08 & 5.19 & 1.74E-06 & 3.98  \\
			\toprule
	\end{tabular}}\label{exdirik123}
\end{table}

We also consider \eqref{eq2} with Dirichlet boundary conditions
\begin{equation}\label{expd}
		u(0,t) = h_1(t),\quad u(2\pi,t) = h_2(t), \quad u_{x}(0,t) = h_3(t), \quad u_{x}(2\pi,t) = h_4(t),
\end{equation}
where $h_i(t)$ ($i=1,2,3,4$) are suitably chosen such that the exact solution is \eqref{exact}.  The numerical fluxes are
 \begin{align}\label{fluxd}
	\leri{ \hat{u}_h, \hat{p}_h, \hat{q}_h, \hat{r}_h  }_{\jp}= \left\{
	\begin{aligned}
		&\big( h_1,                 h_3,    q^{+}_h + \kappa_1 \jump{p_h},              r^+_h                 \big)_{\ff},  ~&   &j=0, \\
		&\big( u^{\thet}_h,p^{\tla}_h,q^{\la}_h,r^{\tilde{\thet}}_h   \big)_{\jp},  ~&   &j=1,\cdots,N-1, \\
		&\big( h_2,    h_4,               q^-_h,  r^{-}_h - \kappa_2 \jump{u_h}                          \big)_{N+\ff},  ~&  &j=N,
	\end{aligned}
	\right.
\end{align}
where $\kappa_1 = \mathcal{O}\left(h^{-1}\right)$ and $\kappa_2 = \mathcal{O}\left(h^{-3 }\right)$ are penalty parameters. In Table \ref{exdirik123}, we show superconvergence results for generalized fluxes with $ \theta=1.1$, $ \lambda=0.9$ and $\kappa_1 = 10$, $\kappa_2 = 15$, indicating that the superconvergent results are also valid for Dirichlet boundary conditions.

\begin{example}\label{exd}
In the case of discontinuous initial value problem,  we consider
\begin{equation*}
		\left\{
		\begin{array}{lr}
			u_t + u_x + u_{xx} + u_{xxxx} = 0,\\
			u(x,0) = \left\{
			\begin{aligned}
				1, & ~&  &\mo{x}\le 0.5 ,\\
				0, & ~&  &\text{otherwise},
			\end{aligned}
			\right.
		\end{array}
		\right.
\end{equation*}
with periodic boundary conditions.
With negligible error, the exact solution is taken as that in \cite{meng2012superconvergence}, i.e.,
\begin{equation*}
		u(x,t) = \ff + 2\sum_{\omega=1}^{5}e^{(\omega^2\pi^2 - \omega^4\pi^4)t}\frac{\sin(\frac{\omega\pi}{2})}{\omega\pi}\cos(\omega\pi(x-t)).
\end{equation*}
\end{example}
Superconvergent orders of numerical fluxes, cell averages and Radau points with $ \theta=0.8$, $\lambda=1.2$, $T = 0.01$ are presented in Table \ref{exdk123}. This demonstrates that superconvergent results also hold for
discontinuous initial value problem.

\begin{table}[ht]\centering
	\caption{Errors and orders for Example \ref{exd} with a discontinuous initial value data and $\theta=0.8$, $\lambda=1.2$, $ T=0.01.$}	
	{\begin{tabular}{cccccccccc}
			\toprule
			\multicolumn{1}{l}{} & $N$   & $\norm{e_{un}}$ & Order & $\norm{e_{u}}_c$ & Order & $\norm{e_{ur}}$& Order & $\norm{e^{\star}_{ur}}$ & Order \\
			\hline
		\multirow{4}{*}{$P^1$} & 8   & 9.78E-04         &--   & 4.40E-04         &--   & 1.60E-03                   &--   & 4.78E-02                       &--   \\
		& 16  & 1.04E-04         & 3.24  & 2.77E-05         & 3.99  & 2.62E-04                   & 2.63  & 1.43E-02                       & 1.74  \\
		& 24  & 2.97E-05         & 3.08  & 5.82E-06         & 3.85  & 8.12E-05                   & 2.89  & 6.60E-03                       & 1.91  \\
		& 32  & 1.24E-05         & 3.04  & 2.08E-06         & 3.58  & 3.48E-05                   & 2.95  & 3.70E-03                       & 1.96  \\
		\hline
		\multirow{4}{*}{$P^2$} & 12  & 1.32E-06         &--   & 1.10E-07         &--   & 2.51E-05                   &--   & 7.39E-04                       &--   \\
		& 16  & 3.17E-07         & 4.97  & 2.41E-08         & 5.28  & 8.16E-06                   & 3.91  & 3.14E-04                       & 2.98  \\
		& 20  & 1.04E-07         & 4.98  & 7.55E-09         & 5.19  & 3.38E-06                   & 3.95  & 1.61E-04                       & 2.99  \\
		& 24  & 4.20E-08         & 4.99  & 2.95E-09         & 5.15  & 1.64E-06                   & 3.97  & 9.34E-05                       & 2.99  \\
		\hline
		\multirow{4}{*}{$P^3$} & 4   & 4.16E-06         &--   & 1.67E-06         &--   & 1.51E-04                   &--   & 1.61E-02                       &--   \\
		& 8   & 3.48E-08         & 6.90  & 7.88E-09         & 7.73  & 7.47E-06                   & 4.34  & 1.40E-03                       & 3.51  \\
		& 12  & 2.20E-09         & 6.81  & 3.31E-10         & 7.82  & 1.10E-06                   & 4.73  & 3.06E-04                       & 3.77  \\
		& 16  & 3.03E-10         & 6.89  & 3.54E-11         & 7.77  & 2.71E-07                   & 4.86  & 1.01E-04                       & 3.87    \\
			\toprule
	\end{tabular}}\label{exdk123}
\end{table}

\begin{example}\label{exn}
To investigate the case for nonlinear problems, consider the Kuramoto--Sivashinsky equation
\begin{equation*}
	u_t + {f(u)}_x + u_{xx} + \sigma u_{xxx} + u_{xxxx} = 0, ~x \in [-30,30]
\end{equation*}
with $ f(u) = \frac{u^2}{2} $ and the exact solution is
\begin{align*}
u(x,t) = c+ 9 -15\leri{\tanh\leri{k(x-ct-x_0)} + \tanh^{2}\leri{k(x-ct-x_0)} - \tanh^{3}\leri{k(x-ct-x_0)}},
\end{align*}
where $ \sigma = 4 $, $ c=6 $, $ k=\ff $ and $ x_0=-10 $. Note that periodic boundary conditions can be used, as the boundary value is quite small for short time simulations, say $T=0.1$.
\end{example}
	
We use the Godunov flux for the nonlinear convection term and generalized fluxes for linear terms. Table \ref{exnk123} lists superconvergent orders for numerical fluxes generalized fluxes with $ \theta=1.1$, $\lambda=0.9$, which shows that the superconvergence property also holds true for nonlinear problems.
\begin{table}[ht]\centering
		\caption{Errors and orders for Example \ref{exn} with $ \theta=1.1$, $\lambda=0.9$, $ T=0.1.$}	
		{\begin{tabular}{cccccccccc}
				\toprule
				\multicolumn{1}{l}{} & $N$   & $\norm{e_{un}}$ & Order & $\norm{e_{u}}_c$ & Order & $\norm{e_{ur}}$& Order & $\norm{e^{\star}_{ur}}$ & Order \\
				\hline
				\multirow{4}{*}{$P^1$} & 160 & 1.60E-03         &--  & 1.50E-03         &--  & 1.43E-02                   &--  & 4.14E-02                       &--  \\
				& 320 & 1.66E-04         & 3.27  & 1.97E-04         & 2.90  & 1.60E-03                   & 3.19  & 1.23E-02                       & 1.75  \\
				& 480 & 4.59E-05         & 3.16  & 6.02E-05         & 2.92  & 4.53E-04                   & 3.07  & 5.60E-03                       & 1.92  \\
				& 640 & 1.88E-05         & 3.11  & 2.58E-05         & 2.94  & 1.94E-04                   & 2.94  & 3.20E-03                       & 1.96  \\
				\hline
				\multirow{4}{*}{$P^2$} & 120 & 5.82E-05         &--  & 2.82E-05         &--  & 5.27E-04                   &--  & 2.13E-02                       &--  \\
				& 160 & 1.37E-05         & 5.02  & 6.97E-06         & 4.86  & 1.82E-04                   & 3.70  & 9.37E-03                       & 2.85  \\
				& 200 & 4.45E-06         & 5.06  & 2.29E-06         & 4.98  & 7.20E-05                   & 4.15  & 5.23E-03                       & 2.61  \\
				& 240 & 1.76E-06         & 5.07  & 9.13E-07         & 5.04  & 3.69E-05                   & 3.67  & 2.99E-03                       & 3.08  \\
				\hline
				\multirow{4}{*}{$P^3$} & 40  & 1.19E-03         &--  & 1.04E-03         &--  & 2.19E-02                   &--  & 7.96E-02                       &--  \\
				& 80  & 3.05E-06         & 8.61  & 5.35E-06         & 7.60  & 6.16E-04                   & 5.15  & 6.85E-03                       & 3.54  \\
				& 120 & 1.55E-07         & 7.34  & 2.28E-07         & 7.78  & 8.75E-05                   & 4.81  & 1.43E-03                       & 3.86  \\
				& 160 & 2.37E-08         & 6.53  & 3.08E-08         & 6.97  & 2.45E-05                   & 4.43  & 5.44E-04                       & 3.37      \\
				\toprule
\end{tabular}}\label{exnk123}
\end{table}

\section{Concluding remarks}\label{conclusion}
In this paper, we study superconvergence of the LDG method using generalized numerical fluxes for one-dimensional linear fourth-order problems.
By constructing correction functions and using properties of GGR projections, a superconvergent bound for interpolation errors is shown. Under a suitable numerical initial condition,
superconvergence regarding numerical flux, cell averages and generalized Radau points are established. Extension to mixed boundary conditions is given.
Problems with Dirichlet boundary conditions, discontinuous initial condition and nonlinear convection term are also numerically tested, demonstrating that the superconvergence results hold for more general cases.
Analysis of nonlinear and multidimensional equations is challenging, which constitutes our future work.

\bibliographystyle{amsplain}
\bibliography{paper}
\end{document}